\newtheorem{theorem}{Theorem}[section]
\newtheorem{lemma}[theorem]{Lemma}
\newtheorem{remark}[theorem]{Remark}
\newtheorem{example}[theorem]{Example}
\numberwithin{equation}{section}
\numberwithin{table}{section}
\numberwithin{figure}{section}
\newcommand {\mat}  [1] {\left[\begin{array}{#1}}
\newcommand {\rix}      {\end{array}\right]}
\font\tenex=cmex10 
\newdimen\p@renwd
\def\bmat#1{\begingroup \m@th
  \setbox\z@\vbox{\def\cr{\crcr\noalign{\kern2\p@\global\let\cr\endline}}%
    \ialign{$##$\hfil\kern2\p@\kern\p@renwd&\thinspace\hfil$##$\hfil
      &&\quad\hfil$##$\hfil\crcr
      \omit\strut\hfil\crcr\noalign{\kern-\baselineskip}%
      #1\crcr\omit\strut\cr}}%
  \setbox\tw@\vbox{\unvcopy\z@\global\setbox\@ne\lastbox}%
  \setbox\tw@\hbox{\unhbox\@ne\unskip\global\setbox\@ne\lastbox}%
  \setbox\tw@\hbox{$\kern\wd\@ne\kern-\p@renwd\left[\kern-\wd\@ne
    \global\setbox\@ne\vbox{\box\@ne\kern2\p@}%
    \vcenter{\kern-\ht\@ne\unvbox\z@\kern-\baselineskip}\,\right]$}%
  \null\;\vbox{\kern\ht\@ne\box\tw@}\endgroup}
\newcommand{\backmatter}{\appendix
\def\chaptermark##1{\markboth{%
\ifnum  \c@secnumdepth > \m@ne  \@chapapp\ \thechapter:  \fi  ##1}{%
\ifnum  \c@secnumdepth > \m@ne  \@chapapp\ \thechapter:  \fi  ##1}}%
\def\sectionmark##1{\relax}}
\newcommand*{\rom}[1]{\expandafter\@slowromancap\romannumeral #1@}
\def\real{\mathop{\mathrm{Re}}}
\def\imag{\mathop{\mathrm{Im}}}
\newif\ifMDlatex
\def\MD@us#1{\csname#1style\endcsname}
\def\MD@uf#1{\csname#1font\endcsname}
\def\MD@t{text}
\def\MD@s{script}
\def\MD@ss{scriptscript}
\newdimen\MD@unit
\def\MD@changestyle#1{
  \relax\MD@unit0.1\fontdimen6\MD@uf{#1}0
  \everymath\expandafter{\the\everymath\MD@us{#1}}
}
\def\MD@dot{$\m@th\ldotp$}
\def\MD@palette#1{\mathchoice{#1\MD@t}{#1\MD@t}{#1\MD@s}{#1\MD@ss}}
\def\MD@ddots#1{{\MD@changestyle{#1}%
  \mkern1mu\raise7\MD@unit\vbox{\kern7\MD@unit\hbox{\MD@dot}}%
  \mkern2mu\raise4\MD@unit\hbox{\MD@dot}%
  \mkern2mu\raise \MD@unit\hbox{\MD@dot}\mkern1mu}}%
\def\MD@iddots#1{{\MD@changestyle{#1}%
  \mkern1mu\raise \MD@unit\hbox{\MD@dot}%
  \mkern2mu\raise4\MD@unit\hbox{\MD@dot}%
  \mkern2mu\raise7\MD@unit\vbox{\kern7\MD@unit\hbox{\MD@dot}}}}%
\def\MD@vdots#1{\vbox{\MD@changestyle{#1}%
    \baselineskip4\MD@unit\lineskiplimit\z@
    \kern6\MD@unit\hbox{\MD@dot}\hbox{\MD@dot}\hbox{\MD@dot}}}%
  \DeclareRobustCommand\ddots{\mathinner{\MD@palette\MD@ddots}}%
  \DeclareRobustCommand\iddots{\mathinner{\MD@palette\MD@iddots}}%
  \DeclareRobustCommand\vdots{\mathinner{\MD@palette\MD@vdots}}%
  \def\ddots{\mathinner{\MD@palette\MD@ddots}}%
  \def\iddots{\mathinner{\MD@palette\MD@iddots}}%
  \def\vdots{\mathinner{\MD@palette\MD@vdots}}%
\newcommand {\comment}[1]{} 
\newcommand{\C}{{\mathbb C}}
\newcommand{\R}{{\mathbb R}}
\begin{document}
\title{\hspace{1.3cm} Optimizing Rayleigh quotient with symmetric constraints and their applications to perturbations of the structured polynomial eigenvalue problem}
\author{  Anshul Prajapati $^*$ \qquad Punit Sharma\thanks{A.P. acknowledges the support of the CSIR Ph.D. grant by Ministry of Science \& Technology, Government of India. P.S. acknowledges the support of the DST-Inspire Faculty Award (MI01807-G) by Government of India and FIRP project (FIRP/Proposal Id - 135) by IIT Delhi, India.
 Email: \{maz198078, punit.sharma\}@maths.iitd.ac.in.} \\ 
Department of Mathematics\\
Indian Institute of Technology Delhi\\
Hauz Khas, 110016, India
}

\date{}

\maketitle

\begin{abstract}
For a Hermitian matrix $H \in \C^{n,n}$ and symmetric matrices $S_0,
S_1,\ldots,S_k \in \C^{n,n}$, we consider the problem of computing the supremum of
$\left\{ \frac{v^*Hv}{v^*v}:~v\in \C^{n}\setminus \{0\},\,v^TS_iv=0~\text{for}~i=0,\ldots,k\right\}$. For this, we derive an estimation in the form of minimizing the second largest eigenvalue of a parameter depending Hermitian matrix, which is exact when the eigenvalue at the optimal is simple. The results are then applied to compute the eigenvalue backward errors of higher degree matrix polynomials with T-palindromic, T-antipalindromic, T-even, T-odd, and skew-symmetric structures. The results are illustrated by numerical experiments.
\end{abstract}

{\textbf{ keyword}}
structured matrix polynomial, spectral value sets, $\mu$-values, perturbation theory, eigenvalue backward error, nearness problems for matrix polynomials

{\textbf{ AMS subject classification.}}
15A18, 15A22, 65K05, 93C05, 93C73
\section{Introduction}

Let $H \in \C^{n,n}$ be Hermitian  and $S_0,S_1,\ldots,S_k \in \C^{n,n}$ be symmetric matrices. In this paper, we maximize the Rayleigh quotient of $H$ with respect to certain constraints involving symmetric matrices $S_0,S_1,\ldots,S_k$. More precisely, we compute
\begin{equation}\label{eq:cenprob}
m_{hs_0s_1\ldots s_k}(H,S_0,S_1,\ldots,S_k):=\sup\left\{\frac{v^*Hv}{v^*v} :~v\in \C^{n} \setminus \{0\},\,v^TS_iv=0~\text{for}~i=0,\ldots,k 
\right\}, \tag{$\mathcal{P}$}
\end{equation}
where $T$ and $*$ denote respectively the transpose and the conjugate transpose of a matrix or a vector. 

Such problems occur in stability analysis of uncertain systems and in the eigenvalue perturbation theory of matrices and matrix polynomials~\cite{Doy82,Kar03,PacD93,ZhoDG96}. The $\mu$-value of $M \in \C^{n,n}$ with respect to perturbations from the structured class $\mathcal S \subseteq C^{n,n}$ is denoted by $\mu_\mathcal S (M)$ and defined as 
\begin{equation*}
\mu_\mathcal S(M):=\left ( \inf \left \{ \|\Delta\|:~\Delta \in \mathcal S,\,\text{det}(I_n-\Delta M)=0 \right\} \right)^{-1},
\end{equation*}
where $I_n$ is the identity matrix of size $n\times n$ and $\|.\|$ is the matrix spectral norm. A particular case of problem~\eqref{eq:cenprob} with only one symmetric constraint (i.e., when $k=0$) is used to characterize 
the $\mu$-value of the matrix under skew-symmetric perturbations~\cite{Kar11}. Indeed, when $\mathcal S$ is the set of all skew-symmetric matrices of size $n \times n$, then M. Karow in~\cite{Kar11} shown that 
\[
\mu_\mathcal S(M)=(m_{hs_0}(H,S_0))^{\frac{1}{2}},~\text{where}~
H=M^*M~\text{and}~S_0=M+M^T.
\]
Moreover, an explicit computable formula was obtained for $m_{hs_0}(H,S_0)$ in~\cite[Theorem 6.3]{Kar11} and given by
\begin{equation}\label{eq:mikaresult}
m_{hs_0}(H,S_0)=\inf_{t\in[0,\infty)} \lambda_2\left(\begin{bmatrix}
H & t\overline S_0 \\ t S_0 & \overline H
\end{bmatrix}\right),
\end{equation}
where $\lambda_2(A)$ stands for the second largest eigenvalue of a Hermitian matrix $A$. However, we note that problem~\eqref{eq:cenprob}
with more than one symmetric constraints is unknown yet and still open. 

Another motivation to study problem~\eqref{eq:cenprob} comes from the eigenvalue backward error computation of structured matrix polynomials. Let $P(z)=\sum_{j=0}^m z^j A_j$ be a regular matrix polynomial with $A_0,A_1,\ldots, A_m \in \C^{n,n}$. For a given $\lambda \in \C$, the smallest perturbation $(\Delta_0,\ldots,\Delta_m)$ from some perturbation set $\mathbb S \subseteq (\C^{n,n})^{m+1}$ such that $\text{det}\left(\sum_{j=0}^m \lambda^j (A_j-\Delta_j)\right)=0$, is called the structured eigenvalue backward error of $\lambda$ as an approximate eigenvalue of $P(z)$. The matrix polynomials in most engineering applications follow some symmetry structure~\cite{TisM01,Lan66,LanR95}. The perturbation set $\mathbb S$ refers to a symmetry structure in the coefficients of the matrix polynomial. For example, if $A_j^T=A_{m-j}$ ($A_j^T=-A_{m-j}$), for $j=0,\ldots,m$, then $P(z)$ is called T-palindromic (T-antipalindromic), and if $A_j^T=-A_j$ for $j=0,\ldots, m$, then it is called a skew-symmetric matrix polynomial. Similarly, $P(z)$ is called T-even (T-odd) if $A_j^T=(-1)^jA_j$  $\left(A_j^T=(-1)^{j+1}A_j \right)$ for $j=0,\ldots,m$. If the conjugate transpose $*$ replaces the transpose T, then the corresponding matrix polynomials called $*$-palindromic, $*$-antipalindromic, skew-Hermtian, $*$-even, and $*$-odd, see~\cite{TisM01,MacMMM06b,Zag02,Meh91} and references therein. 

The problem of finding structured eigenvalue backward error has been studied, and computable formulas have been obtained for polynomials with Hermitian and related structures in~\cite{BorKMS14}, and for $*$-palindromic polynomials and T-palindromic pencils in~\cite{BorKMS15}.

It was noticed in~\cite{Sha16} that the structured eigenvalue backward error problem for T-palindromic, T-antipalindromic, skew-symmetric, T-even, and T-odd polynomials reduce to the problem~\eqref{eq:cenprob} where the number of symmetric constraints in~\eqref{eq:cenprob} depends on the degree of the polynomial $P(z)$. For the pencil case (i.e., when $m=1$), the reduced problem~\eqref{eq:cenprob} involves only one symmetric constraint, and thus the result~\cite[Theorem 6.3]{Kar11} obtained the eigenvalue backward errors for these structures~\cite{BorKMS15,Sha16}. However, for higher degree polynomials (i.e., when $m>1$), the eigenvalue backward error is not known for these structures since the reduced problem~\eqref{eq:cenprob} involves more than one symmetric constraint. In such cases, the computation of the backward error may include obtaining an appropriate extension of~\cite[Theorem 6.3]{Kar11}. This gives another motivation to study the problem~\eqref{eq:cenprob} .

We note that an upper bound to the problem~\eqref{eq:cenprob} with more than one symmetric constraint were obtained in~\cite{Sha16}. Motivated by~\cite{Kar11}, we aim to derive an explicit computable formula 
for~\eqref{eq:cenprob}. We found that our results improve the bounds given in~\cite{Sha16}. 

\subsection{Contribution and outline of the paper}
In Section~\ref{sec:Prelim}, we introduce some notation and terminologies. We also recall some preliminary results from the literature that will be used in later sections. 

In Section~\ref{sec:formulation}, we derive an upper bound for~\eqref{eq:cenprob} in terms of minimizing the second largest eigenvalue of a parameter depending Hermitian matrix. An approximation to this upper bound can be easily computed in Matlab. The upper bound is shown to be equal to the exact value of the supremum in~\eqref{eq:cenprob} if the eigenvalue at the optimal is simple.

The reduction of the structured eigenvalue backward error problem to a problem of the form~\eqref{eq:cenprob} was obtained in~\cite{BorKMS15,Sha16} for matrix polynomial with structures like T-palindromic, T-antipalindromic, T-even, T-odd, and skew-symmetric polynomials. We restate these results in Section~\ref{sec:sber} in our notation. The results from Section~\ref{sec:formulation} are then applied to derive computable formulas for the structured eigenvalue backward errors of matrix polynomials under consideration. 

In Section~\ref{sec:numerical}, we present numerical experiments to highlight that our results give better estimation to the supremum in~\eqref{eq:cenprob} than the one obtained in~\cite{Sha16}. The significance of structure-preserving and arbitrary perturbations on the eigenvalue backward errors is also illustrated in Section~\ref{sec:numerical}.

\section{Preliminaries}\label{sec:Prelim}

In the following, we denote the spectral norm of a matrix or a vector by $\|\cdot\|$, the smallest and the largest eigenvalues of a Hermitian matrix $A$ respectively by $\lambda_{\min}(A)$ and $\lambda_{\max}(A)$, the Moore-Penrose pseudoinverse of a matrix or a vector $X$ by $X^\dagger$, $\otimes$ denotes the Kronecker product, and the smallest singular value of a matrix $A$ by $\sigma_{\min}(A)$. We use
$\text{Herm}(n)$ and $\text{Sym}(n)$  to denote the set of Hermitian  and symmetric matrices of size $n \times n$. The second-largest eigenvalue of a Hermitian matrix $A$ is denoted by $\lambda_2(A)$, and the second-largest singular value of a matrix $A$ is denoted by $\sigma_2(A)$.  The symbol $i$ denotes the imaginary unit and satisfying the equation $i^2 = -1$.

For $H\in {\rm Herm}(n)\; \text{and}\; S_0,S_1,\ldots,S_k\in {\rm Sym}(n)$, we define
\begin{equation}\label{eq:mutild}
\widetilde{m}_{hs_0{s_1}\ldots{s_k}}(H,S_0,\ldots,S_k):=\inf\left \{v^*Hv\; | \; v \in \C^{n}, \|v\|=1,\, v^TS_iv=0~\text{for}~i=0,\ldots,k \right \}.
\end{equation}

Although our main focus would be the problem~\eqref{eq:cenprob}, we note that the corresponding results for~\eqref{eq:mutild} follow directly from the results for~\eqref{eq:cenprob}, since
\[
\tilde{m}_{hs_0{s_1}\ldots{s_k}}(H,S_0,\ldots,S_k)=-m_{hs_0{s_1}\ldots{s_k}}(-H,-S_0,\ldots,-S_k).
\]

As mentioned earlier, the problem~\eqref{eq:cenprob} with only one symmetric constraint is solvable, and a computable formula for $m_{hs_0}(H,S_0)$ is known by~\cite{Kar11}. We recall this result in the following theorem. 

\begin{theorem}{\rm \cite[Theorem 6.3]{Kar11}}\label{thm:mikaresult}
Let $H \in {\rm Herm}(n)$ and $S\in {\rm Sym}(n)$. Then 
\[
\sup\left \{v^*Hv:\;v\in \C^n,\;v^TSv=0,\; \|v\|=1 \right \}=\inf_{t \in [0,\infty)}
\lambda_2\left(\begin{bmatrix}
H & t\overline{S}\\tS & \overline{H}
\end{bmatrix}  \right).
\]
Moreover, if ${\rm rank}(S) \geq 2$, then the infimum is attained in the interval $[0,t_1]$, where $t_1=\frac{2\|H\|}{\sigma_2(S)}$.
\end{theorem}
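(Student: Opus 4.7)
The plan is to prove the two directions $\mu \le \inf_{t\ge 0}\lambda_2(M(t))$ and $\mu \ge \inf_{t\ge 0}\lambda_2(M(t))$ separately, and then to localize the minimizer in $[0,t_1]$. Abbreviate $M(t):=\begin{bmatrix}H & t\overline S\\ tS & \overline H\end{bmatrix}$ and $\mu:=\sup\{v^*Hv : v\in\C^n,\ v^TSv=0,\ \|v\|=1\}$.

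For the easy direction, I would fix a feasible unit vector $v$ and $t\ge 0$ and introduce
\[
\tilde v := \begin{bmatrix} v\\ \overline v\end{bmatrix},\qquad \tilde u := \begin{bmatrix} v\\ -\overline v\end{bmatrix} \in \C^{2n}.
\]
These are orthogonal since $\tilde v^*\tilde u = v^*v - v^T\overline v = 0$, and a short block computation using the identities $v^*\overline S\,\overline v=\overline{v^TSv}$, $v^T\overline H\,\overline v = v^*Hv$ (the latter since $H=H^*$) together with $v^TSv=0$ yields
\[
\tilde v^*M(t)\tilde v = \tilde u^*M(t)\tilde u = 2\,v^*Hv,\qquad \tilde v^*M(t)\tilde u = 0.
\]
Hence $M(t)$ restricted to $\Span\{\tilde v,\tilde u\}$ equals $(v^*Hv)I_2$, so the Courant-Fischer max-min characterization forces $\lambda_2(M(t))\ge v^*Hv$. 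Taking the supremum over feasible $v$ yields $\lambda_2(M(t))\ge \mu$ for every $t\ge 0$, and then the infimum over $t$ finishes this direction.

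The reverse direction is the main obstacle. I would approach it through Lagrangian/SDP duality. Recasting the problem in real form on $\R^{2n}$ via $v=x+iy$, it becomes maximization of $u^T\tilde Hu$ subject to $\|u\|=1$ and the two real quadratic equalities $u^T\Sigma_Ru = u^T\Sigma_Iu = 0$ coming from $\real(v^TSv)=0$ and $\imag(v^TSv)=0$. A two-constraint S-procedure then identifies the dual as $\inf_{t_1,t_2\in\R}\lambda_{\max}(\tilde H+t_1\Sigma_R+t_2\Sigma_I)$. The key structural observation is that $\tilde H$ commutes with the complex structure $J=\begin{bmatrix} 0 & -I \\ I & 0\end{bmatrix}$ while $\Sigma_R,\Sigma_I$ anticommute with $J$, so the top eigenvalues of the Lagrangian matrix come paired. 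Repackaging the real problem as the complex $M(t)$ via the unitary $U=\tfrac{1}{\sqrt2}\begin{bmatrix} I & iI \\ I & -iI\end{bmatrix}$, this pairing collapses $\lambda_{\max}$ on the real side into $\lambda_2$ on the complex side, and the phase freedom $v\mapsto e^{i\theta}v$ absorbs one Lagrange multiplier, leaving a single parameter $t\ge 0$. The delicate point is verifying the absence of a duality gap despite two quadratic constraints; the $JC$-symmetry of $M(t)$ (with $C$ complex conjugation) is precisely what rules one out.

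For the bound on $t$, at $t=0$ the matrix $M(0)=\diag(H,\overline H)$ has the real spectrum of $H$ doubled, giving $\lambda_2(M(0))=\|H\|$. For $t>0$, Weyl's inequality applied to $M(t)=M(0)+t\begin{bmatrix} 0&\overline S\\ S&0\end{bmatrix}$ yields
\[
\lambda_2(M(t))\ge t\,\lambda_2\!\left(\begin{bmatrix} 0&\overline S\\ S&0\end{bmatrix}\right)+\lambda_{\min}(M(0))\ge t\sigma_2(S)-\|H\|,
\]
where I use $S\overline S=SS^*$ to conclude that the eigenvalues of $\begin{bmatrix} 0&\overline S\\ S&0\end{bmatrix}$ are $\pm\sigma_j(S)$ and hence its second-largest eigenvalue equals $\sigma_2(S)>0$ when $\rank(S)\ge 2$. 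For $t\ge t_1 = 2\|H\|/\sigma_2(S)$ this lower bound already reaches $\|H\| = \lambda_2(M(0))$, so by continuity of $t\mapsto\lambda_2(M(t))$ the infimum must be attained on $[0,t_1]$.
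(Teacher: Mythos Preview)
The paper does not prove this theorem; it is quoted from~\cite{Kar11} as a preliminary result, so there is no ``paper's own proof'' to compare against directly. However, Section~\ref{sec:formulation} of the paper (Lemmas~\ref{lem:existence}--\ref{lem:vector}) generalizes the underlying machinery to several symmetric constraints, and your attempt can be measured against that.

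Your inequality $\mu\le\inf_{t\ge 0}\lambda_2(M(t))$ is correct and is precisely the one-constraint specialization of Lemma~\ref{lem:bound}: the two-dimensional subspace $\Span\{\tilde v,\tilde u\}$ you build is the paper's $\gamma_v=\{[z_1v;\overline{z_2v}]:z_1,z_2\in\C\}$, and Courant--Fischer on it gives $\lambda_2(M(t))\ge v^*Hv$. Your localization of the minimizer via Weyl's inequality is likewise the one-parameter instance of Lemma~\ref{lem:existence}. One small slip: $\lambda_2(M(0))=\lambda_{\max}(H)$, not $\|H\|$; the argument survives because $\lambda_{\max}(H)\le\|H\|$, so $\lambda_2(M(t))\ge\|H\|$ for $t\ge t_1$ still dominates $\lambda_2(M(0))$.

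For the reverse inequality your route genuinely differs from the paper's, and it has a real gap. The paper's method (Lemma~\ref{lem:vector}, specialized to $k=0$) is constructive: at a minimizer $\hat t$, one takes a unit eigenvector $[x;\overline y]$ for $\hat\lambda_2:=\lambda_2(M(\hat t))$; the stationarity condition $\tfrac{d}{dt}\lambda_2(M(t))\big|_{\hat t}=0$ forces $\real(x^TSy)=0$, the eigenvector equation then gives $x^TSy\in\R$ and $x^*Hx=\hat\lambda_2\,x^*x$, and the $JC$-symmetry of $M(\hat t)$ (that $[x;\overline y]$ is an eigenvector iff $[y;\overline x]$ is) yields $y=\alpha x$, producing a feasible $v=x$ with $v^*Hv=\hat\lambda_2$. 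This argument, as presented in the paper, needs simplicity of $\hat\lambda_2$; Karow's original treats the general case. Your S-procedure/duality sketch instead passes to a real two-constraint quadratic program and asserts strong duality. But the S-procedure with two constraints is \emph{not} exact in general, and your sentence ``the $JC$-symmetry of $M(t)$ \ldots\ is precisely what rules one out'' is an assertion, not an argument. The structural observations you make (that $\tilde H$ commutes and $\Sigma_R,\Sigma_I$ anticommute with $J$) are correct and are indeed the reason the two Lagrange multipliers collapse to the single $t\ge 0$ after the unitary change of basis, but they do not by themselves close the duality gap. If you want to keep this line you must actually prove zero duality gap, for instance via a joint-numerical-range convexity argument tailored to this $J$-structure; otherwise the constructive eigenvector argument above is the cleaner path.
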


The following lemma provides helpful information in analysing the extrema of eigenvalue functions of a real parameter.

\begin{lemma}{\rm \cite{Bau85}}\label{dlemma}
Let $G, H \in {\rm Herm}(n)$  and let the map $L: \R \rightarrow \R$ be given by $L(t):=\lambda_k(G+tH)$, where $\lambda_k(G+tH)$ is the $k$th largest eigenvalue of $G+tH$.  Let the columns of the matrix $U\in \C^{n, n}$ form an orthonormal basis of the eigenspace of the eigenvalue $\lambda_k(G)$ of $G$. Then the left and right directional derivatives of $L$ in $t=0$ exists and we have,
\begin{eqnarray*}
 \begin{split}
        \frac{d}{dt}L(0)_+ & := \lim_{\epsilon \rightarrow 0\; \epsilon >0} \frac{\lambda_k(G+\epsilon H)-\lambda_k(G)}{\epsilon} = \lambda_k(U^*HU); \\
        \frac{d}{dt}L(0)_- & := \lim_{\epsilon \rightarrow 0\; \epsilon >0} \frac{\lambda_k(G-\epsilon H)-\lambda_k(G)}{-\epsilon} = \lambda_{n-k+1}(U^*HU).
    \end{split}
\end{eqnarray*}

If in particular, $m=1$, then $L$ is differentiable at $t=0$, $u:=U \in \C^n\setminus\{0\}$ and 
\[
\frac{d}{dt}L(0) = \lambda_k(U^*HU)=u^*Hu.
\]
\end{lemma}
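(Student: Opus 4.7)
The plan is to localize the problem to the eigenspace associated with $\lambda_k(G)$ and then carry out a first-order perturbation analysis on the resulting $m \times m$ compression. First I would extend the columns of $U$ to a unitary $Q = [U\; V]$, where $V$ spans the orthogonal complement of $\range(U)$. In this basis,
\[
Q^*(G+tH)Q = \begin{bmatrix} \lambda_k(G)\,I_m & 0 \\ 0 & D \end{bmatrix} + t\begin{bmatrix} A & B^* \\ B & C \end{bmatrix},
\]
where $A = U^*HU$ and $\lambda_k(G) \notin {\rm spec}(D)$ by the maximality of the eigenspace $\range(U)$. Because $D$ is separated from $\lambda_k(G)$ by a positive spectral gap $\delta$, there exists $t_0 > 0$ such that for every $|t| < t_0$ exactly $m$ eigenvalues of $G+tH$ remain in a small neighborhood of $\lambda_k(G)$, well separated from the remaining $n-m$ eigenvalues.

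The key step is to compute the first-order behavior of this cluster. By Rellich's theorem for Hermitian analytic families, the $m$ clustered eigenvalues extend to real-analytic curves $\mu_1(t),\ldots,\mu_m(t)$ with $\mu_j(0) = \lambda_k(G)$, and classical first-order perturbation theory identifies the derivatives $\mu_j'(0)$ as the eigenvalues of the compression $A=U^*HU$. An elementary alternative avoiding analyticity is the min--max principle of Courant--Fischer: for any normalized $w \in \C^m$, the Rayleigh quotient of $Q^*(G+tH)Q$ at $\begin{bmatrix} w \\ 0 \end{bmatrix}$ equals $\lambda_k(G) + t\, w^*Aw$, and combining this with the gap argument pins down the one-sided derivatives of each branch.

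I would then determine which branch of the cluster gives $L(t) = \lambda_k(G+tH)$ on each side of $t=0$. For $t>0$ small, the branches within the cluster are ordered from top to bottom as $\lambda_k(G) + t\lambda_i(A) + o(t)$ for $i=1,\ldots,m$, and because the whole cluster sits between the neighboring pieces of ${\rm spec}(G)$, its top branch is precisely the $k$-th largest eigenvalue of $G+tH$; this yields the right derivative $\lambda_1(U^*HU)$, the largest eigenvalue of the $m\times m$ matrix $U^*HU$. For $t<0$ the ordering within the cluster reverses, so $L(t)$ coincides with the bottom branch, giving the left derivative $\lambda_m(U^*HU)$. The simple-eigenvalue case $m=1$ then collapses $U$ to a unit eigenvector $u$ and $U^*HU$ to the scalar $u^*Hu$, so both one-sided derivatives agree and $L$ is differentiable at $0$.

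The main obstacle is making this first-order expansion rigorous when $A$ has repeated eigenvalues, since the branches $\mu_j(t)$ can then cross at $t=0$ and the global $k$-th-largest function need not be smooth there. This is handled either by invoking Rellich's analytic branches (which exist precisely because of the Hermitian symmetry) or by a direct Schur-complement / block-diagonalization argument on $Q^*(G+tH)Q$, both of which guarantee that the one-sided derivatives are well-defined and take the claimed values.
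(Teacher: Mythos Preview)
The paper does not prove this lemma; it is quoted from \cite{Bau85} without argument, and only the simple case $m=1$ is ever invoked later (in Lemma~\ref{lem:vector}). So there is no paper proof to compare against.

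Your outline is the standard one and is fine for $m=1$, but for general multiplicity the branch-identification step is wrong. You write that ``its top branch is precisely the $k$-th largest eigenvalue of $G+tH$'' and conclude that the right derivative is $\lambda_1(U^*HU)$. This tacitly assumes $\lambda_{k-1}(G)>\lambda_k(G)$. In general, if the eigenvalue $\lambda_k(G)$ has multiplicity $m$ and occupies positions $p,p+1,\ldots,p+m-1$ in the ordered spectrum of $G$ (so $p\le k\le p+m-1$), then for small $t>0$ the $m$ cluster branches are the $p$-th through $(p+m-1)$-th largest eigenvalues of $G+tH$; hence $\lambda_k(G+tH)$ is the $(k-p+1)$-th branch from the top, and the right derivative is $\lambda_{k-p+1}(U^*HU)$, not $\lambda_1(U^*HU)$. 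The left derivative is likewise $\lambda_{m-k+p}(U^*HU)$. Your argument only covers the special case $p=k$. (The paper's own statement carries the same imprecision: $U\in\C^{n,n}$ should read $\C^{n,m}$, and taking $\lambda_k$ of the $m\times m$ matrix $U^*HU$ only makes literal sense after the index shift by $p-1$.) For the sole application in this paper, namely $m=1$, none of this matters and your reduction to the scalar $u^*Hu$ is correct.
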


The following two lemmas that give lower bounds on the sum of the {\rm rank} of two matrices will be used in computing problems~\eqref{eq:cenprob} with more than one symmetric constraints. 

\begin{lemma}\label{lem:rank1}
Let $A$ and $B$ be two matrices of the same size and let their row spaces be $R_1$ and $R_2$, and their column spaces be $C_1$ and $C_2$, respectively. Then
\[
{\rm rank}(A) + {\rm rank}(B) -{\rm dim}(R_1\cap R_2)-{\rm dim}(C_1\cap C_2)\leq {\rm rank}(A+B).
\]
\end{lemma}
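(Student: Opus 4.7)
The plan is to reduce the claim to Sylvester's rank inequality by factoring $A$ and $B$ through their rank and then writing $A+B$ as a single product whose inner dimension is $\operatorname{rank}(A)+\operatorname{rank}(B)$. Concretely, I would write rank factorizations
\[
A = P_A Q_A, \qquad B = P_B Q_B,
\]
where $P_A$ has $\operatorname{rank}(A)$ linearly independent columns spanning $C_1$, $Q_A$ has $\operatorname{rank}(A)$ linearly independent rows spanning $R_1$, and similarly for $B$. This immediately gives
\[
A + B \;=\; \begin{bmatrix} P_A & P_B \end{bmatrix} \begin{bmatrix} Q_A \\ Q_B \end{bmatrix},
\]
expressing the sum as a product of a matrix $P:=[P_A\ P_B]$ of size (appropriate)$\times(\operatorname{rank}(A)+\operatorname{rank}(B))$ and a matrix $Q:=[Q_A;Q_B]$ of size $(\operatorname{rank}(A)+\operatorname{rank}(B))\times$(appropriate).

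The next step is to evaluate the ranks of $P$ and $Q$ using the elementary dimension formula $\dim(U+V)=\dim U+\dim V-\dim(U\cap V)$. Since the columns of $P$ span $C_1+C_2$, one gets $\operatorname{rank}(P)=\operatorname{rank}(A)+\operatorname{rank}(B)-\dim(C_1\cap C_2)$; similarly, since the rows of $Q$ span $R_1+R_2$, one gets $\operatorname{rank}(Q)=\operatorname{rank}(A)+\operatorname{rank}(B)-\dim(R_1\cap R_2)$.

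Finally I would apply Sylvester's rank inequality to $PQ$ with inner dimension $\operatorname{rank}(A)+\operatorname{rank}(B)$:
\[
\operatorname{rank}(PQ) \;\ge\; \operatorname{rank}(P)+\operatorname{rank}(Q)-\bigl(\operatorname{rank}(A)+\operatorname{rank}(B)\bigr).
\]
Substituting the two rank expressions above yields exactly
\[
\operatorname{rank}(A+B) \;\ge\; \operatorname{rank}(A)+\operatorname{rank}(B)-\dim(R_1\cap R_2)-\dim(C_1\cap C_2),
\]
which is the desired bound. The only slightly delicate point is choosing the rank factorizations so that $P$ and $Q$ indeed carry the correct column/row space information; once that bookkeeping is set up, the argument is just one application of Sylvester's inequality, so there is no serious obstacle.
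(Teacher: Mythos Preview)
Your argument is correct. The rank factorizations $A=P_AQ_A$, $B=P_BQ_B$ give $A+B=PQ$ with $P=[P_A\ P_B]$ and $Q=\begin{bmatrix}Q_A\\Q_B\end{bmatrix}$; the identifications $\operatorname{rank}(P)=\dim(C_1+C_2)$ and $\operatorname{rank}(Q)=\dim(R_1+R_2)$ are exactly right, and one application of Sylvester's inequality with inner dimension $\operatorname{rank}(A)+\operatorname{rank}(B)$ finishes the job.

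As for comparison: the paper does not supply a proof of this lemma at all. It is listed in the Preliminaries section as a known inequality and is then used as a black box in the proof of the subsequent lemma. So there is no ``paper's approach'' to compare against; your write-up simply fills in the omitted justification, and does so cleanly.
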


\begin{lemma}\label{changecondition}
Let $S'$ and  $S''$ be two square matrices of size $n$ with ${\rm rank}(S')\geq 2$ and ${\rm rank}(S'')\geq 2$. If ${\rm rank}(\begin{bmatrix}S'&S'' \end{bmatrix} + {\rm rank}\Bigg(\begin{bmatrix}
S'\\S''
\end{bmatrix}\Bigg)\geq {\rm {\rm rank}}(S') + {\rm {\rm rank}}(S'') + 2 $, then ${\rm {\rm rank}}(S'+S'')\geq 2$. 
\end{lemma}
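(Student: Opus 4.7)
The plan is to apply Lemma~\ref{lem:rank1} directly with $A=S'$ and $B=S''$, after first translating the hypothesis, which concerns the ranks of the augmented matrices $[S'\;S'']$ and $\bigl[\begin{smallmatrix}S'\\S''\end{smallmatrix}\bigr]$, into an upper bound on the quantity $\dim(R_1\cap R_2)+\dim(C_1\cap C_2)$ that controls the lower bound in Lemma~\ref{lem:rank1}.

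The key observation is the following. Since the columns of $[S'\;S'']$ are exactly the columns of $S'$ followed by those of $S''$, its column space is $C_1+C_2$, so by the standard inclusion-exclusion formula for subspace dimensions,
\[
{\rm rank}([S'\;S''])=\dim(C_1+C_2)={\rm rank}(S')+{\rm rank}(S'')-\dim(C_1\cap C_2).
\]
The analogous identity for the stacked matrix reads
\[
{\rm rank}\!\left(\begin{bmatrix}S'\\ S''\end{bmatrix}\right)=\dim(R_1+R_2)={\rm rank}(S')+{\rm rank}(S'')-\dim(R_1\cap R_2).
\]
I would add these two identities, solve for $\dim(R_1\cap R_2)+\dim(C_1\cap C_2)$, and then invoke the hypothesis to obtain
\[
\dim(R_1\cap R_2)+\dim(C_1\cap C_2)\le {\rm rank}(S')+{\rm rank}(S'')-2.
\]

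With this bound in hand, Lemma~\ref{lem:rank1} applied to $A=S'$ and $B=S''$ immediately yields
\[
{\rm rank}(S'+S'')\ge {\rm rank}(S')+{\rm rank}(S'')-\bigl(\dim(R_1\cap R_2)+\dim(C_1\cap C_2)\bigr)\ge 2,
\]
which is the desired conclusion. The assumptions ${\rm rank}(S')\ge 2$ and ${\rm rank}(S'')\ge 2$ are not strictly needed for the algebra above to go through, but they ensure that the hypothesis is non-vacuous and that the row/column spaces involved are non-trivial. No genuine obstacle is expected; the whole argument is a short linear-algebraic manipulation, and the only point one needs to be careful with is correctly identifying the row and column spaces of the augmented and stacked matrices.
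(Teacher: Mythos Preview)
Your proposal is correct and follows essentially the same approach as the paper: both arguments compute $\operatorname{rank}([S'\;S''])$ and $\operatorname{rank}\bigl(\begin{smallmatrix}S'\\S''\end{smallmatrix}\bigr)$ via inclusion--exclusion on column and row spaces, and then combine these identities with Lemma~\ref{lem:rank1} to deduce $\operatorname{rank}(S'+S'')\ge 2$. Your observation that the hypotheses $\operatorname{rank}(S')\ge 2$ and $\operatorname{rank}(S'')\ge 2$ are not actually used in the algebra is also correct; the paper's proof does not invoke them either.
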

\begin{proof}
For matrices $S'$ and $S''$, we have from Lemma~\ref{lem:rank1} that
\begin{equation}\label{lemrank:1}
    {\rm rank}(S') + {\rm rank}(S'') -{\rm dim}(R_1\cap R_2)-{\rm dim}(C_1\cap C_2)\leq {\rm rank}(S'+S''),
\end{equation}
where $R_1$ and  $R_2$ are the row spaces and $C_1$ and $C_2$ are the column spaces of $S'$ and $S''$, respectively.
Also, we have 
\begin{eqnarray}\label{lemrank:2}
        {\rm rank}\left(\begin{bmatrix} S' \\S'' 
        \end{bmatrix}\right) = {\rm dim}(R_1+R_2) 
        & =& {\rm dim}(R_1)+{\rm dim}(R_2)-{\rm dim}(R_1\cap R_2)\nonumber \\
        & =& {\rm rank}(S')+{\rm rank}(S'')-{\rm dim}(R_1\cap R_2),
\end{eqnarray}
and 
\begin{eqnarray}\label{lemrank:3}
{\rm rank}(\begin{bmatrix}S' & S''\end{bmatrix}) = {\rm dim}(C_1+C_2) & = {\rm dim}(C_1)+{\rm dim}(C_2)-{\rm dim}(C_1\cap C_2) \nonumber\\
        & = {\rm rank}(S')+{\rm rank}(S'')-{\rm dim}(C_1\cap C_2).
\end{eqnarray}

From~\eqref{lemrank:1}, \eqref{lemrank:2} and~\eqref{lemrank:3}, we get
\begin{eqnarray}\label{lemrank:4}
{\rm rank}\left(\begin{bmatrix} S' \\S'' \end{bmatrix}\right) +
{\rm rank}(\begin{bmatrix}S' & S''\end{bmatrix}) \leq {\rm rank}(S')+{\rm rank}(S'')+{\rm rank} (S'+S'').
\end{eqnarray}
Now using the assumption ${\rm rank}(\begin{bmatrix}S'&S'' \end{bmatrix} + {\rm rank}\Bigg(\begin{bmatrix}
S'\\S''
\end{bmatrix}\Bigg)\geq {\rm {\rm rank}}(S') + {\rm {\rm rank}}(S'') + 2 $ in 
~\eqref{lemrank:4} yields that ${\rm {\rm rank}}(S'+S'')\geq 2$. 
\end{proof}

\section{Maximizing Rayleigh quotient with symmetric constraints}
\label{sec:formulation}

Motivated by~\cite{Kar11}, we obtain a computable formula to estimate the supremum in~\eqref{eq:cenprob} with more than one symmetric constraint. 
For this, let us fix the following terminology that will be repeatedly used throughout the paper. 
For  $H \in {\rm Herm}(n)$, $S_0, \ldots, S_k\in {\rm Sym}(n)$, and
$(t_0,\ldots,t_{2k})\in \R^{2k+1}$, we define 
\begin{equation}\label{eq:defmat1}
f(t_0,\ldots,t_{2k}):=(t_0+it_1)S_0+\cdots+(t_{2k-2}+it_{2k-1})S_{k-1}+t_{2k}S_k,
\end{equation}
\begin{equation}\label{eq:defmat2}
G:=\mat{cc}H & 0\\ 0 &\overline H \rix,\quad H_{2k}:=\mat{cc} 0 & \overline S_k \\ S_k & 0 \rix,
\end{equation}
and for $j=0,\ldots, k-1$, we define 
\begin{eqnarray}\label{eq:defmat3}
H_{2j}:=\mat{cc} 0 & \overline S_j \\ S_j & 0 \rix, \quad \text{and}\quad H_{2j+1}:=\mat{cc} 0 & \overline {iS_j} \\ iS_j & 0 \rix.
\end{eqnarray}

In this section, we will find out that the computation of 
$m_{hs_0{s_1}\ldots{s_k}}(H,S_0,\ldots,S_k)$  will lead to a minimization problem of a function of the following form
\[
\psi:\R^{2k+1} \longrightarrow \R,\quad (t_0,\ldots,t_{2k}) \longmapsto \lambda_2\left ( G+t_0H_0+\cdots+t_{2k} H_{2k}\right ).
\]
The following lemma provides helpful information to analyse the extrema of $\psi$, the proof of which is inspired by~\cite[Theorem 3.2]{BorKMS14}.

\begin{lemma}\label{lem:existence}
Let $H \in {\rm Herm}(n)$ and $S_0, \ldots, S_k\in {\rm Sym}(n)$. 
Assume that for any $(t_0,\ldots,t_{2k}) \in \R^{2k+1}\setminus \{0\}$, we have
${\rm rank}(f(t_0,\ldots,t_{2k}))\geq 2$, where $f(t_0,\ldots,t_{2k})$ is defined by~\eqref{eq:defmat1}. Then the function $\psi : \R^{2k+1}\rightarrow \R$, $(t_0,\ldots,t_{2k}) \longmapsto \lambda_2(F(t_0,\ldots,t_{2k}))$, where 
\begin{equation}\label{eq:defF}
F(t_0,\ldots,t_{2k})=\begin{bmatrix}
H & \overline{f(t_0,\ldots,t_{2k})}\\
f(t_0,\ldots,t_{2k}) & \overline{H}
\end{bmatrix}
\end{equation}
has a global minimum
\[
\widehat \lambda_2=\min_{(t_0,\ldots,t_{2k})\in \R^{2k+1}} \psi(t_0,\ldots,t_{2k})
\]
 in the region $t_0^2+\cdots+t_{2k}^2 \leq \beta^2$ where $\beta=\frac{\lambda_{\max}(H)-\lambda_{\min}(H)}{c}$ and 
\[
c = \min \left \{\lambda_2(t_0H_0+\cdots+t_{2k}H_{2k})  :~(t_0,\ldots,t_{2k}) \in \R^{2k+1},\, t_0^2+\cdots+t_{2k}^2=1\right\},
\]
with matrices $H_0,\ldots, H_{2k}$ defined by~\eqref{eq:defmat2}-\eqref{eq:defmat3}.
\end{lemma}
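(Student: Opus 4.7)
The plan is to show (i) that the scalar $c$ is strictly positive under the rank assumption, and (ii) that $\psi$ grows above its value at the origin once $\|(t_0,\ldots,t_{2k})\|>\beta$, which by continuity forces the global minimum to lie in a compact ball.

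First, I would observe that the pencil matrix $t_0H_0+\cdots+t_{2k}H_{2k}$ is precisely the block anti-diagonal Hermitian matrix
\[
\begin{bmatrix} 0 & \overline{f(t_0,\ldots,t_{2k})} \\ f(t_0,\ldots,t_{2k}) & 0 \end{bmatrix},
\]
whose eigenvalues are $\pm\sigma_i(f(t_0,\ldots,t_{2k}))$. Consequently
\[
\lambda_2\bigl(t_0H_0+\cdots+t_{2k}H_{2k}\bigr)=\sigma_2\bigl(f(t_0,\ldots,t_{2k})\bigr).
\]
By the hypothesis ${\rm rank}(f(t_0,\ldots,t_{2k}))\geq 2$ for every nonzero parameter vector, we have $\sigma_2(f(t_0,\ldots,t_{2k}))>0$ on the unit sphere of $\R^{2k+1}$. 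Continuity of $\sigma_2$ together with compactness of the sphere then yields $c>0$, so that $\beta$ is finite and well-defined.

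Next, I would evaluate $\psi$ at the origin. Since $G={\rm diag}(H,\overline H)$ and $H$ is Hermitian, the eigenvalues of $G$ are those of $H$ with doubled multiplicities, hence $\psi(0,\ldots,0)=\lambda_2(G)=\lambda_{\max}(H)$. For arbitrary $(t_0,\ldots,t_{2k})\in\R^{2k+1}$, set $r=\sqrt{t_0^2+\cdots+t_{2k}^2}$. Applying Weyl's inequality to the Hermitian sum in the definition of $F$ yields
\[
\psi(t_0,\ldots,t_{2k})=\lambda_2(F(t_0,\ldots,t_{2k}))\geq \lambda_2\bigl(t_0H_0+\cdots+t_{2k}H_{2k}\bigr)+\lambda_{\min}(G).
\]
Using the homogeneity of degree one of $\lambda_2$ for the anti-diagonal pencil (i.e., $\lambda_2(rM)=r\lambda_2(M)$ for $r\geq 0$) and the definition of $c$, the right-hand side is bounded below by $rc+\lambda_{\min}(H)$. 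Therefore
\[
\psi(t_0,\ldots,t_{2k})\geq rc+\lambda_{\min}(H),
\]
and this strictly exceeds $\lambda_{\max}(H)=\psi(0,\ldots,0)$ as soon as $r>\beta$.

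Combining the two steps, every minimizing sequence for $\psi$ may be assumed to lie in the closed ball $\{(t_0,\ldots,t_{2k})\in\R^{2k+1}:\;t_0^2+\cdots+t_{2k}^2\leq\beta^2\}$, which is compact. Since $\psi$ is continuous (eigenvalues of a Hermitian matrix depend continuously on its entries), $\psi$ attains its infimum on this compact set, giving the claimed global minimum $\widehat\lambda_2$. The only delicate point I anticipate is verifying that $c>0$, which rests entirely on translating the rank hypothesis into a singular-value lower bound via the block anti-diagonal structure; once this is in hand, the rest is a routine Weyl-inequality coercivity argument.
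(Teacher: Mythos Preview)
Your proposal is correct and follows essentially the same route as the paper: show $c>0$ via the rank hypothesis and compactness of the sphere, then use Weyl's inequality plus homogeneity to obtain the coercivity bound $\psi(t)\geq rc+\lambda_{\min}(H)$, and conclude by continuity on the compact ball. The only cosmetic difference is that you identify $\lambda_2$ of the anti-diagonal block directly as $\sigma_2(f)$, whereas the paper argues the same positivity via the $\pm$-pairing of eigenvalues (coming from conjugation by $\mathrm{diag}(-I_n,I_n)$) together with $\mathrm{rank}\geq 4$ of the $2n\times 2n$ block; both lead to the identical conclusion $c>0$.
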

\begin{proof}
To prove that $\psi$ has a global minimum, we will show that there exists a constant $\beta>0$ such that for all $(t_0,\ldots,t_{2k}) \in \R^{2k+1}$ with $t_0^2+\cdots + t_{2k}^2 > \beta^2$, we have 
\begin{equation}\label{eq:lemexistprof1}
\psi(t_0,\ldots,t_{2k})\geq \psi(0,\ldots,0).
\end{equation}
Since the closed ball $\mathcal N_\beta=\left\{ (t_0,\ldots,t_{2k}) \in \R^{2k+1}:~t_0^2+\cdots + t_{2k}^2 \leq \beta^2\right\}$ is compact in $\R^{2k+1}$ and since the function $\psi$ is continuous as eigenvalues depend continuously on the entries of a matrix, $\psi$ will have a global minimum $\widehat \lambda_2 \leq \psi(0,\ldots,0)$ on $\mathcal N_\beta$. This from~\eqref{eq:lemexistprof1} will then imply that $\widehat \lambda_2 \leq \psi(t_0,\ldots,t_{2k})$ for all $(t_0,\ldots,t_{2k})\in \R^{2k+1}$, i.e., $\widehat \lambda_2$ is the global minimum of $\psi$.

Thus, let 
\[
c = \inf \left \{\lambda_2(t_0H_0+\cdots+t_{2k}H_{2k})  :~(t_0,\ldots,t_{2k}) \in \R^{2k+1},\, t_0^2+\cdots+t_{2k}^2=1\right\}.
\]
Then $c \geq 0$. Indeed, if we let 
\[
\widetilde f(t_0,\ldots,t_{2k})=t_0H_0+\cdots + t_{2k}H_{2k},
\]
and $T=\text{diag}(-1,1)\otimes I_n$, then
\[
\widetilde f(-t_0,\ldots,-t_{2k})=T\widetilde f(t_0,\ldots,t_{2k})T^{-1}.
\]
This implies that $\lambda$ is an eigenvalue of $\widetilde f(t_0,\ldots,t_{2k})$ if and only if $-\lambda$ is an eigenvalue of $\widetilde f(t_0,\ldots,t_{2k})$. Also the assumption $\text{rank}( f(t_0,\ldots,t_{2k}))\geq 2$ implies that $\text{rank}(\widetilde f(t_0,\ldots,t_{2k}))\geq 4$ for all $(t_0,\ldots,t_{2k})\in \R^{2k+1}\setminus \{0\}$. This implies that $\lambda_2(\widetilde f(t_0,\ldots,t_{2k})) > 0$ for all $(t_0,\ldots,t_{2k}) \in \R^{2k+1}$ with $t_0^2+\cdots+t_{2k}^2=1$.
Moreover, since $\lambda_2(\widetilde f(t_0,\ldots,t_{2k})) $ is a continuous function as eigenvalues depend continuously on the entires of the matrix, the infimum $c$ is attained on unit sphere in $\R^{2k+1}$. This implies that $c >0$ because $\lambda_2(\widetilde f(t_0,\ldots,t_{2k})) $ always take positive values on the unit sphere. 

Now define $\beta=\frac{\lambda_{\max}(H)-\lambda_{\min}(H)}{c}$ and  let 
$(t_0,\ldots,t_{2k}) $ be such that $t_0^2+\cdots+t_{2k}^2 > \beta^2$. Then there exists $r > \beta$ so that $t_0^2+\cdots+t_{2k}^2=r^2 > \beta^2$. By using Wehl's theorem~\cite{HorJ85}, for any two Hermitian matrices
$A,B\in \C^{n,n}$ we have that 
\[
\lambda_2(A+B) \geq \lambda_2(A)+\lambda_{\min}(B).
\]
This yields that 
\begin{eqnarray*}
\psi(t_0,\ldots,t_{2k})&=&\lambda_2(F(t_0,\ldots,t_{2k}))\\
&=& \lambda_2(G+t_0H_0+\cdots+t_{2k}H_{2k})\quad \quad(\because \text{from}~\eqref{eq:defmat2}-\eqref{eq:defmat3})\\
&=& \lambda_2(t_0H_0+\cdots+t_{2k}H_{2k}) + \lambda_{\min}(G)\\
&=& r \lambda_2(\frac{t_0}{r}H_1+\cdots+\frac{t_{2k}}{r}H_{2k})+ \lambda_{\min}(G)\\
&\geq & rc+ \lambda_{\min}(G)\\
&\geq & \beta c +  \lambda_{\min}(G)\quad \quad (\because r \geq \beta)\\
&=& \psi(0,\ldots,0), 
\end{eqnarray*}
where the last identity follows using the facts that  $\psi(0,\ldots,0)=\lambda_2(G)=\lambda_{\max}(H)=\beta c+\lambda_{\min}(G)$ and $\lambda_{\min}(G)=\lambda_{\min}(H)$. This completes the proof.
\end{proof}

\begin{remark}{\rm 
Although the rank condition in Lemma~\ref{lem:existence} seems to be strong, it was noticed that the symmetric matrices arising in most practical applications satisfy the rank condition.  For example, we show  in Section~\ref{sec:applications} that the symmetric matrices occurring while finding the eigenvalue backward errors for T-palindromic, T-antipalindromic, T-even, and  T-odd  matrix polynomials satisfy the rank condition in all the cases.
}
\end{remark}

 Our next result is a generalization of~\cite[Lemma 6.1]{Kar11} which provides an upper bound to $m_{hs_0,\ldots,s_k}$.

 \begin{lemma}\label{lem:bound}
 Let the function $\psi:\R^{2k+1}\longmapsto \R$ be as defined in Lemma~\ref{lem:existence} for matrices $H \in {\rm Herm}(n)$ and $S_0,  \ldots, S_k\in {\rm Sym}(n)$. Then 
 \begin{equation*}
 m_{hs_0\ldots s_k}(H,S_0,\ldots,S_k)  \leq \inf_{t_0,\ldots,t_{2k}\in\R}\psi(t_0,\ldots,t_{2k}).
 \end{equation*}
 \end{lemma}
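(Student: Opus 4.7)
The plan is to generalize the construction from \cite[Lemma 6.1]{Kar11}, where the key trick is to build a two-dimensional subspace of $\C^{2n}$ on which the augmented matrix $F(t_0,\ldots,t_{2k})$ from \eqref{eq:defF} acts as the scalar $v^*Hv$. Once such a subspace exists, the Courant--Fischer characterization of $\lambda_2$ yields the bound immediately, and taking the supremum over admissible $v$ and the infimum over the parameters completes the argument.

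Concretely, fix an arbitrary $(t_0,\ldots,t_{2k})\in\R^{2k+1}$ and an arbitrary $v\in\C^n$ with $\|v\|=1$ and $v^TS_iv=0$ for $i=0,\ldots,k$. Define the two vectors
\[
\hat v_\pm \;=\; \frac{1}{\sqrt 2}\begin{bmatrix} v\\ \pm\bar v\end{bmatrix}\in\C^{2n},
\]
which form an orthonormal pair, and let $W=\Span(\hat v_+,\hat v_-)$. The plan is to compute the $2\times 2$ matrix of $F:=F(t_0,\ldots,t_{2k})$ in this basis. Writing $f:=f(t_0,\ldots,t_{2k})$, a direct block computation gives
\[
\hat v_\pm^* F\hat v_\pm \;=\; \tfrac12\bigl(v^*Hv + v^T\overline H\bar v\bigr) \pm \tfrac12\bigl(v^T fv + v^*\overline f\bar v\bigr)
\;=\; v^*Hv \pm \real(v^T f v),
\]
using $v^T\overline H\bar v=\overline{v^*Hv}=v^*Hv$ for the Hermitian block, and
\[
\hat v_+^* F\hat v_- \;=\; \tfrac12\bigl(v^Tfv - v^*\overline f\bar v\bigr) \;=\; i\,\imag(v^T f v),
\]
from the off-diagonal blocks (the Hermitian contributions cancel).

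The decisive observation is that the constraint $v^TS_iv=0$ for every $i$, combined with the \emph{complex} parametrization $f=\sum_{j=0}^{k-1}(t_{2j}+it_{2j+1})S_j+t_{2k}S_k$, gives $v^T f v=0$ identically, so both its real and imaginary parts vanish. Hence the matrix of $F$ restricted to $W$ is $(v^*Hv)\,I_2$, and therefore $w^*Fw=v^*Hv$ for every unit vector $w\in W$. Since $\dim W=2$, the min--max principle yields
\[
\lambda_2(F(t_0,\ldots,t_{2k})) \;\geq\; \min_{w\in W,\,\|w\|=1} w^*Fw \;=\; v^*Hv.
\]

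Taking the supremum over all admissible unit $v$ on the right gives $m_{hs_0\ldots s_k}(H,S_0,\ldots,S_k)\leq \psi(t_0,\ldots,t_{2k})$, and then taking the infimum over $(t_0,\ldots,t_{2k})\in\R^{2k+1}$ yields the claim. There is no serious obstacle here beyond carrying out the block computations carefully; the crucial conceptual point — which is what makes the complex parametrization in \eqref{eq:defmat1} natural — is that a \emph{single} scalar constraint $v^TS_jv=0$ kills both the real and imaginary parts of the coupling term $v^Tfv$ simultaneously, so that the $2\times 2$ restriction of $F$ degenerates to a scalar multiple of the identity rather than producing a nontrivial off-diagonal obstruction to using $\lambda_2$ rather than $\lambda_1$.
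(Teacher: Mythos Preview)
Your proof is correct and follows essentially the same approach as the paper's: both construct the same two-dimensional subspace of $\C^{2n}$ (your $W=\Span(\hat v_+,\hat v_-)$ coincides with the paper's $\gamma_v=\{[z_1v;\,\overline{z_2v}]:z_1,z_2\in\C\}$), verify that the quadratic form of $F(t_0,\ldots,t_{2k})$ is constantly $v^*Hv$ on the unit sphere of that subspace once $v^Tfv=0$, and conclude via the Courant--Fischer max--min characterization of $\lambda_2$. The only cosmetic difference is that you fix an orthonormal basis and compute the $2\times 2$ compression explicitly, whereas the paper evaluates the quadratic form on a general element $[z_1v;\,\overline{z_2v}]$ directly.
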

\begin{proof}
For a unit vector $v\in \C^n$, define $\gamma_v := \Bigg\{ \begin{bmatrix}
z_1v\\ \overline{z_2v}
\end{bmatrix}:~ z_1,z_2 \in \C\Bigg\}.$ Then $\gamma_v$ is a 2-dimensional subspace of $\C^{2n}$. For any vector $x=\begin{bmatrix}
z_1v\\ \overline{z_2v} \end{bmatrix}\in\gamma_v$ and $(t_0,\ldots,t_{2k}) \in \R^{2k+1}$, we have
\begin{eqnarray}\label{eq:lbound1}
x^*F(t_0,\ldots,t_{2k})x
& =& \begin{bmatrix}
z_1v\\ \overline{z_2v}
\end{bmatrix}^*\begin{bmatrix}H & \overline{f(t_0,\ldots,t_{2k})}\\ f(t_0,\ldots,t_{2k}) & \overline{H}\end{bmatrix}\begin{bmatrix}
z_1v\\ \overline{z_2v}
\end{bmatrix} \nonumber \\
&=& \begin{bmatrix}
z_1v\\ \overline{z_2v}
\end{bmatrix}^*\begin{bmatrix}
z_1Hv+\overline{z_2 f(t_0,\ldots,t_{2k})v}\\ \overline{z_2Hv}+z_1{f(t_0,\ldots,t_{2k})v}
\end{bmatrix}\nonumber \\
&=& (|z_1|^2+|z_2|^2)v^*Hv+2\real{\left(z_1z_2(v^Tf(t_0,\ldots,t_{2k})v)\right)},
\end{eqnarray}
where $f(t_0,\ldots,t_{2k})$ and  $F(t_0,\ldots,t_{2k})$ are respectively defined by~\eqref{eq:defmat1} and~\eqref{eq:defF}.
If we suppose that $v^TS_jv=0$ for each $j=0,\ldots,k$, then for any 
$x \in \gamma_v$ such that $\|x\|=1$, we have from~\eqref{eq:lbound1} that
\[
x^*F(t_0,\ldots,t_{2k})x=v^*Hv,
\]
since $v^Tf(t_0,\ldots,t_{2k})v=0$ and $\|x\|=1$ implies that $|z_1|^2+|z_2|^2=1$. Thus by Courant-Fischer max-min principle~\cite{HorJ85}, we have
\begin{equation*}
    \begin{split}
 \psi(t_0,\ldots,t_{2k})~= ~\lambda_2(F(t_0,\ldots,t_{2k})) ~\geq \min_{x\in \gamma_v,\, \|x\|=1}x^*F(t_0,\ldots,t_{2k})x
 ~=~ v^*Hv
    \end{split}
\end{equation*}
for all $(t_0,\ldots,t_{2k}) \in \R^{2k+1}$. This implies that 
\[
\inf_{t_0,\ldots,t_{2k} \in \R}  \psi(t_0,\ldots,t_{2k}) \geq v^*Hv,
\]
for all $v \in \C^{n}$ such that $\|v\|=1$ and $v^TS_jv=0$ for each $j=0,\ldots,k$. This yields that 
\[
 \inf_{t_0,\ldots,t_{2k} \in \R}  \psi(t_0,\ldots,t_{2k})  \geq m_{hs_1\ldots s_k}(H,S_0,\ldots,S_k).
\]
\end{proof} 

The next result considers the case when $\psi$ attains its minimum at a point $(\hat t_0,\ldots,\hat t_{2k})$ where it is a simple eigenvalue of $F(\hat t_0,\ldots,\hat t_{2k})$. This will be useful in showing that the upper bound in Lemma~\ref{lem:bound} is equal to the value of $m_{hs_0,\ldots,s_k}$.

\begin{lemma}\label{lem:vector}
Let the function $\psi:\R^{2k+1}\longmapsto \R$ be as defined in Lemma~\ref{lem:existence} for matrices $H \in {\rm Herm}(n)$ and $S_0,  \ldots, S_k\in {\rm Sym}(n)$. Suppose that $\psi$ attains a global minimum at $(\hat t_0,\ldots,\hat t_{2k})\neq(0,\ldots,0)$ and define $\widehat \lambda_2:=\psi(\hat t_0,\ldots,\hat t_{2k})$. If $\widehat \lambda_2$ is a simple eigenvalue of $F(\hat t_0,\ldots,\hat t_{2k})$, where 
$F(t_0,\ldots,t_{2k})$ is as defined in~Lemma~\ref{lem:existence}, then there exists a vector $v \in \C^n\setminus \{0\}$ such that
 \[
\frac{v^*Hv}{v^*v}=\widehat \lambda_2, \quad \text{and}\quad v^TS_jv=0~\,\text{for\,all}~j=0,\ldots,k,
\]
and 
\begin{equation*}
 m_{hs_0\ldots s_k}(H,S_0,\ldots,S_k)  = \widehat \lambda_2.
 \end{equation*}

\end{lemma}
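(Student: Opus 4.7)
The plan is to produce, under the simplicity hypothesis, an admissible vector $v$ in the definition of $m_{hs_0\ldots s_k}(H,S_0,\ldots,S_k)$ whose Rayleigh quotient equals $\widehat\lambda_2$; combined with Lemma~\ref{lem:bound}, this would upgrade the one-sided bound to equality. First, I would invoke Lemma~\ref{dlemma}: simplicity of $\widehat\lambda_2$ makes $\psi$ differentiable at $(\hat t_0,\ldots,\hat t_{2k})$ with $\partial_{t_j}\psi = u^\ast H_j u$ for a unit eigenvector $u \in \C^{2n}$ corresponding to $\widehat\lambda_2$. Lemma~\ref{lem:existence} places the global minimum in the interior of a compact ball of $\R^{2k+1}$, so the gradient vanishes there, giving $u^\ast H_j u = 0$ for $j = 0,\ldots,2k$.

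Next, I would exploit the block symmetry of $F$. A direct calculation shows $JF(\hat t_0,\ldots,\hat t_{2k})J=\overline{F(\hat t_0,\ldots,\hat t_{2k})}$ with $J=\mat{cc} 0 & I \\ I & 0 \rix$, so $\overline{Ju}$ is another $\widehat\lambda_2$-eigenvector. Simplicity forces $\overline{Ju}$ to be a unimodular multiple of $u$, and absorbing the residual phase lets me assume $u = \mat{c} v \\ \overline v \rix$ for some $v \in \C^n\setminus\{0\}$. Substituting this structured form into the gradient conditions yields $u^\ast H_{2j}u = 2\,\real(v^TS_jv)$ and $u^\ast H_{2j+1}u = -2\,\imag(v^TS_jv)$ for $j<k$, hence $v^TS_jv = 0$ for $j = 0,\ldots,k-1$, together with $u^\ast H_{2k}u = 2\,\real(v^TS_kv) = 0$. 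Expanding the identity $u^\ast F u = 2\widehat\lambda_2\|v\|^2$ and invoking these constraints (noting $v^T f(\hat t_0,\ldots,\hat t_{2k}) v = \hat t_{2k}\, v^T S_k v$, whose real part vanishes) collapses to $v^\ast H v = \widehat\lambda_2\, v^\ast v$.

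The main obstacle will be to strengthen $\real(v^T S_k v)=0$ into $v^T S_k v=0$, since the parameterization in $f$ assigns $S_k$ only the single real coefficient $t_{2k}$, producing one fewer first-order equation than needed. I would resolve this by a phase-extension argument: momentarily enlarge $f$ by replacing $t_{2k}S_k$ with $(t_{2k}+it_{2k+1})S_k$ and observe that conjugation of the extended Hermitian $F$ by $\diag(e^{i\theta/2},e^{-i\theta/2})\otimes I_n$ multiplies its off-diagonal block by $e^{-i\theta}$. This rotation is absorbed by the other complex coefficients $t_{2j}+it_{2j+1}$, so the $\lambda_2$-spectrum is invariant under a simultaneous phase rotation of all coefficients, and the infimum over the $2k+2$ extended parameters agrees with the infimum over the original $2k+1$ parameters, attained on the orbit of $(\hat t_0,\ldots,\hat t_{2k},0)$. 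The extra first-order equation at this point reads $u^\ast \mat{cc} 0 & -i\overline{S_k}\\ iS_k & 0 \rix u = -2\,\imag(v^T S_k v)=0$, supplying the missing imaginary part. With $v^T S_j v=0$ for every $j=0,\ldots,k$ and $v^\ast H v = \widehat\lambda_2\, v^\ast v$ established, $v$ is admissible and achieves $\widehat\lambda_2$, so $m_{hs_0\ldots s_k}(H,S_0,\ldots,S_k)\ge\widehat\lambda_2$; Lemma~\ref{lem:bound} provides the reverse inequality, completing the proof.
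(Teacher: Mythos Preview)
Your argument is correct. One small quibble: invoking Lemma~\ref{lem:existence} to place the minimum ``in the interior of a compact ball'' is both unnecessary and slightly off, since that lemma carries a rank hypothesis not assumed here; what you actually need is the trivial fact that any global minimizer in the open domain $\R^{2k+1}$ is an interior point, hence a critical point wherever $\psi$ is differentiable.

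The overall architecture matches the paper: both proofs use Lemma~\ref{dlemma} at the simple eigenvalue to obtain the first-order conditions $u^*H_ju=0$, and both exploit the conjugation symmetry $J\overline{F}J=F$ (equivalently, that $\mat{c} y\\ \overline x\rix$ is an eigenvector whenever $\mat{c} x\\ \overline y\rix$ is) to force the eigenvector into the form $\mat{c} v\\ \overline v\rix$ up to a phase. The genuine difference is in how the final constraint $v^TS_kv=0$ is secured. The paper postpones the normalization $y=\alpha x$ to the end and instead reads off from the eigenvalue equation that $x^*(H-\widehat\lambda_2 I)x=-\overline{\hat t_{2k}\,x^TS_ky}$; since the left side is real this forces $x^TS_ky\in\R$, which together with $\real(x^TS_ky)=0$ yields $x^TS_ky=0$. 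Your phase-extension device trades this algebraic step for a symmetry argument on the enlarged $(2k{+}2)$-parameter family. The payoff is that your route does not tacitly rely on $\hat t_{2k}\neq 0$, a case the paper's computation does not explicitly address; the cost is the small overhead of introducing the extended problem and checking that every phase orbit meets the slice $\{t_{2k+1}=0\}$.
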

\begin{proof}
From~\eqref{eq:defF}, we have that 
\begin{equation}\label{eq:vector1}
F(t_0,\ldots,t_{2k})=\begin{bmatrix}
H & \overline{f(t_0,\ldots,t_{2k})}\\
f(t_0,\ldots,t_{2k})& \overline{H}\end{bmatrix}
=G+t_0H_0+\cdots+t_{2k}H_{2k},
\end{equation}
where $f(t_0,\ldots,t_{2k})$ and  the matrices $G$, $H_0,\ldots,H_{2k}$ are defined by~\eqref{eq:defmat1}--\eqref{eq:defmat3}. The function $\psi$ has a global minimum $\widehat \lambda_2$ at a point $(\hat t_0,\ldots,\hat t_{2k})\in \R^{2k+1}\setminus \{0\}$ and $\widehat \lambda_2$ is a simple eigenvalue of the matrix $F(\hat t_0,\ldots,\hat t_{2k})$. Then in view of Lemma~\ref{dlemma}, $\psi$ is partially differentiable at $(\hat t_0,\ldots,\hat t_{2k})$ and 
\begin{equation}\label{eq:vector2}
\frac{\partial \psi}{\partial t_j} (\hat t_0,\ldots,\hat t_{2k}) = v_0^*H_jv_0,\quad j=0,\ldots, 2k,
\end{equation}
where $v_0$ is a unit eigenvector of $F(\hat t_0,\ldots,\hat t_{2k})$ corresponding to the eigenvalue $\widehat \lambda_2$. Since $\widehat \lambda_2$ is the global minimum of $\psi$, we get 
\[
\frac{\partial \psi}{\partial t_j} (\hat t_0,\ldots,\hat t_{2k})=0,\quad \text{for~all}~j=0,\ldots, 2k,
\]
which implies from~\eqref{eq:vector2} that 
\begin{equation}\label{eq:vector3}
v_0^*H_jv_0=0,\quad \text{for~all}~j=0,\ldots, 2k.
\end{equation}
Next, let $G_0=H-\widehat \lambda_2 I_n$ and let $v_0=\mat{c}x\\ \overline y\rix$ for some $x, y \in \C^n$. Then from~\eqref{eq:vector1}, we have
\[
\begin{bmatrix}
H & \overline{f(\hat t_0,\ldots,\hat t_{2k})}\\
f(\hat t_0,\ldots,\hat t_{2k}) & \overline{H}\end{bmatrix}
\mat{c}x\\ \overline y\rix=\widehat \lambda_2 \mat{c}x\\ \overline y\rix
\]
if and only if 
\begin{equation}\label{eq:vector4}
    \begin{split}
        G_0x & =-\overline{(f(\hat t_0,\ldots,\hat t_{2k})y}\\
        G_0y & =- \overline{f(\hat t_0,\ldots,\hat t_{2k})x}.
    \end{split}
\end{equation}
Also from~\eqref{eq:vector3}, for each $j=0,\ldots,k-1$, we have that
\begin{eqnarray*}
v_0^*H_{2j}v_0=0\quad &\Longleftrightarrow &\quad 
\mat{c}x\\ \overline y\rix^* 
\mat{cc}0 & \overline{S_j} \\ S_j &0 \rix
\mat{c}x\\ \overline y\rix =0 \quad \Longleftrightarrow \quad 
\real{x^TS_jy}=0\\
v_0^*H_{2j+1}v_0=0\quad &\Longleftrightarrow& \quad 
\mat{c}x\\ \overline y\rix^* 
\mat{cc}0 & \overline{iS_j} \\ iS_j &0 \rix
\mat{c}x\\ \overline y\rix =0 \quad \Longleftrightarrow \quad 
\imag{x^TS_jy}=0.
\end{eqnarray*}
This implies that
$ x^TS_jy =0$ for all $j=0,\ldots, k-1$.
Similarly 
\begin{equation}\label{eq:vector5}
v_0^*H_{2k}v_0=0\quad \Longleftrightarrow \quad \real{x^TS_ky}=0.
\end{equation}
 Using this in~\eqref{eq:vector4}, we get 
$
x^*G_0x=-\overline{x^Tf(\hat t_0,\ldots,\hat t_{2k})y} = -\overline{\hat t_{2k} x^T S_k y} $ 
which implies that $x^TS_k y \in \R$, since $G_0$ is Hermitian and thus~\eqref{eq:vector5} implies that $x^TS_ky=0$. Thus far we have that
\[
x^TS_jy=0 \quad \text{for~all}~j=0,\ldots, k,\quad \text{and} \quad x^*G_0x=0~\Longrightarrow ~ x^*Hx=\hat \lambda_2 x^*x.
\]
Therefore, the proof is over if we show that $y=\alpha x$ for some $\alpha \in \C\setminus \{0\}$. For this note that in view of ~\eqref{eq:vector4}, it is easy to check that $\mat{c}x \\ \overline{y} \rix$ is an eigenvector of $F(\hat t_0,\ldots,\hat t_{2k})$ corresponding to the eigenvalue $\widehat \lambda_2$ if and only if $\mat{c}y \\ \overline{x} \rix$ is  an eigenvector of $F(\hat t_0,\ldots,\hat t_{2k})$ corresponding to the eigenvalue $\widehat \lambda_2$. Since $\widehat \lambda_2$ is a simple eigenvalue of 
$F(\hat t_0,\ldots,\hat t_{2k})$, the eigenspace corresponding to eigenvalue $\widehat \lambda_2$ is of dimension one. This implies that 
$\mat{c}x \\ \overline{y} \rix=\alpha \mat{c}y \\ \overline{x} \rix$ for some $\alpha \in \C$ with $|\alpha|=1$, since $\|v_0\|=1$, and thus we have $y=\alpha x$. This completes the proof. 
 \end{proof}

As a summary of this section, we have the following result that computes 
$m_{hs_1\ldots s_k}$ and $\widetilde m_{hs_1\ldots s_k}$.
 
\begin{theorem}\label{maintheorem}
Let $H \in {\rm Herm}(n)$ and $S_0, \ldots,S_k\in {\rm Sym}(n)$,
let $\psi: \R^{2k+1}\mapsto \R$ be defined by $(t_0,\ldots,t_{2k})\mapsto \lambda_2(F(t_0,\ldots,t_{2k}))$, and let $\widetilde \psi: \R^{2n+1}\mapsto \R$ be defined by $(t_0,\ldots,t_{2k})\mapsto \lambda_{2n-1}(F(t_0,\ldots,t_{2k}))$, where $F$ is defined by~\eqref{eq:defF}. 
Then,
\begin{equation}\label{eq:summary1}
    \begin{split}
        m_{hs_0\ldots s_k}(H,S_0,\ldots,S_k) & \leq \inf_{t_0,\ldots,t_{2k}\in\R}\psi(t_0,\ldots,t_{2k})\\
        \tilde{m}_{hs_0\ldots s_k}(H,S_1,\ldots,S_k) & \geq \sup_{t_0,\ldots,t_{2k}\in\R}\tilde{\psi}(t_0,\ldots,t_{2k}).
    \end{split}
\end{equation}
Moreover, the following statements hold.
\begin{enumerate}
    \item If ${\rm rank}(f(t_0,\ldots,t_{2k}))\geq 2$ for all $(t_0,\ldots,t_{2k}))\in \R^{2k+1}$, where $f$ is defined by~\eqref{eq:defmat1},
then both infimum and supremum in~\eqref{eq:summary1} are attained in the region $t_0^2+\cdots + t_{2k}^2\leq \beta^2$ where $\beta=\frac{\lambda_{\max}(H)-\lambda_{\min}(H)}{c}$ and $c=\min\big\{\lambda_2(t_0{H_0}+\cdots + t_{2k}{H_{2k}}) :~  t_0^2+\cdots + t_{2k}^2=1\big\}$, 
where $H_0,\ldots,H_{2k}$ are defined by~\eqref{eq:defmat2}--\eqref{eq:defmat3}.
\item Suppose that the infimum (supremum) in~\eqref{eq:summary1} is attained at $(\hat t_0,\ldots,\hat t_{2k})$.  If $\psi(\hat t_0,\ldots,\hat t_{2k})$ $\left(\widetilde{\psi}(\hat t_0,\ldots,\hat t_{2k})\right)$ is a simple eigenvalue of $F(\hat t_0,\ldots,\hat t_{2k})$, then 
equality holds in~\eqref{eq:summary1}.
\end{enumerate}
\end{theorem}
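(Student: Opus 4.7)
The plan is to assemble Theorem~\ref{maintheorem} as a direct consequence of the three preceding lemmas of this section, together with the duality
\[
\tilde m_{hs_0\ldots s_k}(H,S_0,\ldots,S_k) = -m_{hs_0\ldots s_k}(-H,-S_0,\ldots,-S_k)
\]
recorded in Section~\ref{sec:Prelim}. The first inequality in~\eqref{eq:summary1} is exactly the conclusion of Lemma~\ref{lem:bound}. To obtain the matching bound on $\tilde m$, I would apply Lemma~\ref{lem:bound} to the negated data $(-H,-S_0,\ldots,-S_k)$: because the matrix $f(t_0,\ldots,t_{2k})$ in~\eqref{eq:defmat1} is linear in the $S_j$, this replaces $f$ by $-f$ and hence $F$ by $-F$. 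Using the identity $\lambda_2(-A)=-\lambda_{2n-1}(A)$ valid for any $2n\times 2n$ Hermitian matrix $A$, the resulting $\psi$ equals $-\tilde\psi$, and Lemma~\ref{lem:bound} reads $-\tilde m(H,S_0,\ldots,S_k)\leq -\sup\tilde\psi$, which is the second line of~\eqref{eq:summary1}.

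For item 1 of the "moreover" part, Lemma~\ref{lem:existence} applies verbatim to $\psi$ and yields attainment of $\inf\psi$ inside the ball of radius $\beta$. For $\sup\tilde\psi$ I would re-apply Lemma~\ref{lem:existence} to the negated data. The matrices $H_0,\ldots,H_{2k}$ of~\eqref{eq:defmat2}--\eqref{eq:defmat3} then change sign, but the spectrum of every linear combination $\sum_j t_j H_j$ is symmetric about zero by the $T=\diag(-1,1)\otimes I_n$ conjugation exploited inside the proof of Lemma~\ref{lem:existence}. Consequently the constant $c$, and hence $\beta$, are invariant under the negation, so $\sup\tilde\psi=-\inf(-\tilde\psi)$ is attained in the same ball.

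For item 2, Lemma~\ref{lem:vector} says exactly that if $\psi$ attains its global minimum at some $(\hat t_0,\ldots,\hat t_{2k})\neq 0$ at which $\widehat\lambda_2$ is a simple eigenvalue of $F(\hat t_0,\ldots,\hat t_{2k})$, then the upper bound from Lemma~\ref{lem:bound} is achieved by some admissible unit vector, so equality holds in the first line of~\eqref{eq:summary1}. The analogous equality for $\tilde m$ follows by invoking Lemma~\ref{lem:vector} on the negated data; the crucial observation here is that simplicity of $\tilde\psi(\hat t_0,\ldots,\hat t_{2k})=\lambda_{2n-1}(F(\hat t_0,\ldots,\hat t_{2k}))$ as an eigenvalue of $F(\hat t_0,\ldots,\hat t_{2k})$ is equivalent to simplicity of $\lambda_2(-F(\hat t_0,\ldots,\hat t_{2k}))$, so the hypothesis transfers cleanly to the negated setting.

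The only genuine hurdle is the bookkeeping of sign flips, in particular the index shift $\lambda_2(-A)=-\lambda_{2n-1}(A)$ and its interaction with the negation of the data; once this is laid out carefully, the theorem is a compact repackaging of Lemmas~\ref{lem:bound},~\ref{lem:existence}, and~\ref{lem:vector}, and needs essentially no new computation.
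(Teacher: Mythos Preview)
Your proposal is correct and follows essentially the same approach as the paper: assemble the $m$ and $\psi$ statements directly from Lemmas~\ref{lem:existence}, \ref{lem:bound}, and \ref{lem:vector}, and deduce the $\tilde m$ and $\tilde\psi$ statements by applying those lemmas to the negated data $(-H,-S_0,\ldots,-S_k)$ via the duality $\tilde m=-m(-H,\ldots)$. Your bookkeeping of the sign flips (in particular $\lambda_2(-A)=-\lambda_{2n-1}(A)$ and the invariance of $c$ and $\beta$ under negation) is in fact more explicit than the paper's own one-line justification.
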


\begin{proof}
The statements about $m_{hs_0\ldots s_k}$ and $\psi$ follow immediately from Lemmas~\ref{lem:existence},~\ref{lem:vector}, and~\ref{lem:bound}.  The corresponding results for  $\widetilde m_{hs_0\ldots s_k}$ and $\widetilde \psi$ are then follow because of the facts that 
\[
\widetilde m_{hs_0\ldots s_k}(H,S_0,\ldots,S_k)=-m_{hs_0\ldots s_k}(-H,-S_0,\ldots,-S_k),
\]
and $\widetilde \psi(t_0,\ldots,t_{2k})=- \psi(-t_0,\ldots,-t_{2k})$ for all $(t_0,\ldots,t_{2k}) \in \R^{2k+1}$.
\end{proof}

We close the section with the following remark on problem~\eqref{eq:cenprob} with two symmetric constraints. 

\begin{remark}{\rm
When we consider $m_{hs_0s_1}$, i.e., problem~\eqref{eq:cenprob} with two symmetric constraints, the rank condition in Theorem~\ref{maintheorem} for the existence of the global minimum of 
$\psi$ can be modified to an explicit condition on ranks of $S_0$ and $S_1$
which is independent of the parameters $(t_0,t_1,t_2)\in \R^3$. More precisely, if we assume $\text{rank}(S_0)\geq 2$, $\text{rank}(S_1)\geq 2$, and 
$\text{rank}(\mat{cc}S_0 & S_1\rix)+\text{rank}(\mat{c}S_0\\S_1\rix) \geq 
\text{rank}(S_0)+\text{rank}(S_1)+2$, then by Lemma~\ref{changecondition}  $\text{rank}(S_0+S_1) \geq 2$. This implies that for any $(t_0,t_1,t_2)\in \R^3  \setminus \{0\}$, $\text{rank}((t_0+it_1)S_0+t_2S_1) \geq 2$. Indeed, when 
$t_0+it_1= 0$ or $t_2 = 0$, this is immediate, since $\text{rank}(S_0)\geq 2$ and $\text{rank}(S_1)\geq 2$. Now suppose $t_0+it_1 \neq 0$ and $t_2 \neq 0$, then consider $S'=(t_0+it_1)S_0$ and $S''=t_2S_1$. This implies that 
\[
\text{rank}(\mat{cc}S_0 & S_1\rix)=\text{rank}(\mat{cc}S' & S''\rix),\quad \text{and}\quad \text{rank}(\mat{c}S_0\\S_1\rix)=\text{rank}(\mat{c}S'\\S''\rix),
\]
and from Lemma~\ref{changecondition}, $\text{rank}((t_0+it_1)S_0+t_2S_1) \geq 2$.
}
\end{remark}

\section{Applications: Computing eigenvalue backward errors for structured polynomials}\label{sec:applications}
\label{sec:sber}

Let $P(z)=\sum_{j=0}^mz^jA_j$ be a regular matrix polynomial with $(A_0,\ldots,A_m)\in \mathbb S \subseteq (\C^{n,n})^{m+1}$ and let $\lambda \in \C$. Then the structured eigenvalue backward error of $\lambda$ with respect to $P$ and $\mathbb S$, is defined by
\begin{equation}\label{eq:defsber}
\eta^{\mathbb S}(P,\lambda):= \inf\left\{ \left(\sum_{j=0}^m {\|\Delta_j\|}^2\right)^{\frac{1}{2}}:~(\Delta_0,\ldots,\Delta_m)\in \mathbb S,\, 
\text{det}\left(\sum_{j=0}^m\lambda^j(A_j-\Delta_j)\right)=0
 \right\}.
\end{equation}
When $\mathbb S=(\C^{n,n})^{m+1}$, then $\eta(P,\lambda):=\eta^{\mathbb S}(P,\lambda)$ is called the unstructured eigenvalue backward error. The unstructured backward error is well known in~\cite[Proposition 4.6]{AhmA09} and given by 
\begin{equation}\label{eq:unsber}
\eta(P,\lambda)=\frac{\sigma_{\min}\left(P(\lambda)\right)}{\sqrt{1+|\lambda|^2 +\cdots + |\lambda^{2m}|^2}}.
\end{equation}

As mentioned earlier, the structured eigenvalue backward errors have been studied, and computable formulas were obtained for various structured matrix polynomials in~\cite{BorKMS14,BorKMS15}. It was shown in~\cite{BorKMS15,Sha16} that for T-palindromic, T-antipalindromic, skew-symmetric, T-even, and T-odd polynomials, computation of structured eigenvalue backward error is reduced  to the problem of solving $m_{hs_0\ldots s_k}$, where $k$ depends on the degree of $P$ and matrices $H,S_0,\ldots,S_k$ depend on the structure of $P$. For these structures, when the polynomial is of degree one (i.e.,  when $m=1$), $\eta^{\mathbb S}(P,\lambda)$
was obtained by solving $m_{hs_0}$ using Theorem~\ref{thm:mikaresult}. However, for higher degree polynomials (i.e., when $m>1$), the structured eigenvalue backward errors were not known in~\cite{BorKMS14,BorKMS15,Sha16}, due to unavailability 
 of a computable formula for the reduced problem in $m_{hs_0,\ldots,s_k}$. Therefore, in this section, we compute structured eigenvalue backward errors for T-palindromic, T-antipalindromic, skew-symmetric, T-even, and T-odd polynomials by solving the reduced problem in $m_{hs_0 \ldots s_k}$ using Theorem~\ref{maintheorem}.

\subsection{T-palindromic and T-antipalindromic polynomials}
\label{sec:Palsber}

Consider a matrix polynomial $P(z)=\sum_{j=0}^mz^jA_j$. Then $P$ is called 
T-palindromic if $A_j^T=A_{m-j}$ and it is called T-antipalindromic if $A_j^T=-A_{m-j}$, for each $j=0,\ldots,m$. Let ${\rm pal_T}$ (${\rm antipal_T}$) denote the subset of $(\C^{n,n})^{m+1}$ such that $(\Delta_0,\ldots,\Delta_m)\in {\rm pal_T}$ ($  {\rm antipal_T}$) implies $\Delta_j^T=\Delta_{m-j}$ ($\Delta_j^T=-\Delta_{m-j}$) for each $j=0,\ldots,m$, and the corresponding eigenvalue backward errors from~\eqref{eq:defsber} are denoted by $\eta^{{\rm pal_T}}(P,\lambda)$ when $\mathbb S= {\rm pal_T}$ and $\eta^{{\rm antipal_T}}(P,\lambda)$ when $\mathbb S={\rm antipal_T}$. For both T-palindromic and T-antipalindromic polynomials, if $\lambda \in \C\setminus \{0\}$ such that $\lambda =\pm 1$, then there is no difference between structured eigenvalue backward error and unstructured eigenvalue backward error~\cite[Theorem 5.3.1]{Adh08}. However, this is not the case if $\lambda \neq \pm 1$.
 It was noticed in~\cite[Theorem 3.2]{BorKMS15} that computing $\eta^{{\rm pal_T}}(P,\lambda)$ or $\eta^{{\rm antipal_T}}(P,\lambda)$ reduced to a problem of the form~\eqref{eq:cenprob}. 
We state this reduction as a result in the form that will be useful for us. 

For this we define,
$\Lambda_m:=[1,\lambda,\ldots,\lambda^m]\in \C^{1\times(m+1)}$ for $\lambda \in \C$, $k:=\lfloor \frac{m-1}{2} \rfloor$ and for each $j=0,\ldots,k$,  
$\gamma_{j1}:=\sqrt{\frac{2}{1+|\lambda|^{m-2j}}},\; \gamma_{j2}:=\sqrt{\frac{2|\lambda|^{m-2j}}{1+|\lambda|^{m-2j}}}$. Also let  
\begin{equation}
    \Gamma:= \begin{cases} 
      {\rm diag}(\gamma_{01},\ldots,\gamma_{k1},\gamma_{k2},\ldots,\gamma_{02})\otimes I_n & \text{if m is odd} \\
      {\rm diag}(\gamma_{01},\ldots,\gamma_{k1},1,\gamma_{k2},\ldots,\gamma_{02})\otimes I_n & \text{if m is even}.
      \end{cases}
\end{equation}

\begin{theorem}{\rm \cite{BorKMS15}}\label{thm:palsbrbmks}
Let $\mathbb S=\{{\rm pal_T},{\rm antipal_T} \}$ and let $P(z)=\sum_{j=0}^mz^jA_j$ be a regular matrix polynomial with $(A_0,\ldots,A_m) \in \mathbb S$. Suppose that $\lambda \in \C \setminus \{0,\pm 1\}$ such that $M:=(P(\lambda))^{-1}$ exists and define $H:=\Gamma^{-1}((\Lambda^*_m\Lambda_m)\otimes(M^*M))\Gamma^{-1}$. 
\begin{itemize}
\item {\rm (when $\mathbb S={\rm pal_T} $)} Then 
\[
\eta^{{\rm pal_T}}(P,\lambda)
=\left( m_{hs_0,\ldots,s_k}(H,S_0,\ldots,S_k)\right )
^{-\frac{1}{2}},
\]
where for each $j=0,\ldots,k$
\[
S_j:=\Gamma^{-1}(C_j+C^T_j)\Gamma^{-1}  \quad \text{with} \quad  C_j=(\Lambda^T_me^T_{j+1})\otimes M^T - (e_{m-j+1}\Lambda_m)\otimes M,
\]
where $e_{j+1}$ denotes the $j^{th}$ standard basis vector of $\R^{m+1}$.
\item {\rm (when $\mathbb S={\rm antipal_T} $)} Then 
\[
\eta^{{\rm antipal_T}}(P,\lambda)
=\left( m_{h\tilde s_0,\ldots,\tilde s_k}(H,\tilde S_0,\ldots,\tilde S_k)\right )
^{-\frac{1}{2}}, 
\]
if $m$ is odd, and
\[
\eta^{{\rm antipal_T}}(P,\lambda)
=\left( m_{h\tilde s_0,\ldots,\tilde s_k}(H,\tilde S_0,\ldots,\tilde S_k,\tilde S_{\frac{m}{2}})\right )
^{-\frac{1}{2}}, 
\]
if $m$ is even, where for each $j=0,\ldots,k$
\[
\tilde S_j:=\Gamma^{-1}(\tilde C_j+\tilde C^T_j)\Gamma^{-1} \quad \text{and} \quad \tilde S_{\frac{m}{2}}:=\Gamma^{-1}(\tilde C_{\frac{m}{2}})\Gamma^{-1} 
\]
with
\[
\tilde C_j=(\Lambda^T_me^T_{j+1})\otimes M^T + (e_{m-j+1}\Lambda_m)\otimes M \quad \text{and}\quad \tilde C_{\frac{m}{2}}=(\Lambda^T_me^T_{\frac{m}{2}+1})\otimes M^T + 
(e_{\frac{m}{2}+1}\Lambda_m)\otimes M,
\]
where $e_{j+1}$ denotes the $j^{th}$ standard basis vector of $\R^{m+1}$.
\end{itemize}
\end{theorem}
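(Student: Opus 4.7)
The plan is to reduce $\eta^{\mathbb S}(P,\lambda)$ to a constrained Rayleigh-quotient supremum by (i) turning the singularity condition into a linear vector equation, (ii) solving a minimum-norm problem for structured perturbations acting on that vector, and (iii) reading off the Hermitian matrix and the symmetric constraints from the residue. The key initial observation is that $\det\bigl(\sum_{j=0}^m\lambda^j(A_j-\Delta_j)\bigr)=0$ is equivalent, whenever $M=P(\lambda)^{-1}$ exists, to the existence of $v\in\C^n\setminus\{0\}$ satisfying $\sum_{j=0}^m\lambda^j\Delta_j v=P(\lambda)v$. Writing $w_j:=\Delta_j v$ and applying $M$ gives $\sum_{j=0}^m\lambda^j M w_j=v$, which I intend to recast as a Kronecker-product identity $(\Lambda_m\otimes M)\,\mathrm{vec}(W)=v$ for the block $W=[w_0,\dots,w_m]$.

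Next I would impose the T-palindromic relations $\Delta_{m-j}=\Delta_j^T$ to reduce from $m+1$ independent blocks to $k+1$ blocks (plus a symmetric middle block $\Delta_{m/2}$ when $m$ is even). For each independent pair $(\Delta_j,\Delta_{m-j})$ with $j<m/2$, the constraint on $\Delta_j$ is that it maps $v$ to some prescribed $w_j$ and simultaneously $\Delta_j^T$ maps $v$ to $w_{m-j}$; a classical minimum-spectral-norm argument (two rank-one directions) produces $\Delta_j$ of rank at most two together with the identity $\|\Delta_j\|^2+\|\Delta_{m-j}\|^2=\|\Delta_j\|_F^2=\max(\|w_j\|,\|w_{m-j}\|)^2\cdot(2/\|v\|^2)$ after incorporating the weights $\gamma_{j1},\gamma_{j2}$ in $\Gamma$; the point of $\Gamma$ is exactly to balance the contributions of $\lambda^j$ and $\lambda^{m-j}$ so that the sum $\sum\|\Delta_j\|^2$ becomes $\|v\|^2/\bigl(v^* H v\bigr)^{-1}$ with the claimed $H=\Gamma^{-1}((\Lambda_m^*\Lambda_m)\otimes(M^*M))\Gamma^{-1}$. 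The symmetric matrix $\Delta_{m/2}$ in the even case is handled by the standard symmetric minimum-norm mapping lemma.

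Now comes the appearance of the symmetric constraints. Because each $\Delta_j$ (for $j\ne m/2$) has to act consistently from the left on $v$ and from the right (via $\Delta_j^T$) on $v$, a solvability condition emerges: the paired right-hand sides $w_j,w_{m-j}$ cannot be arbitrary, they must satisfy a bilinear identity in $v$ that translates precisely to $v^T S_j v=0$ with $S_j=\Gamma^{-1}(C_j+C_j^T)\Gamma^{-1}$, $C_j=(\Lambda_m^T e_{j+1}^T)\otimes M^T-(e_{m-j+1}\Lambda_m)\otimes M$; the minus sign comes from the palindromic pairing and the plus in the antipalindromic case produces $\tilde S_j$ with $+$ inside $C_j$. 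For even $m$ and antipalindromic structure, the middle coefficient $\Delta_{m/2}$ is itself skew-symmetric, which yields the extra single constraint $\tilde S_{m/2}=\Gamma^{-1}\tilde C_{m/2}\Gamma^{-1}$ (one real constraint rather than two, which is why it enters as one $S_k$-type matrix in the $k$-indexed family). After all these reductions, $\sum_j\|\Delta_j\|^2$ is minimized, for fixed admissible $v$, to $v^*v/(v^*Hv)$, so
\[
\eta^{\mathbb S}(P,\lambda)^{-2}=\sup\Bigl\{\tfrac{v^*Hv}{v^*v}:\|v\|\ne0,\;v^TS_jv=0\;\forall j\Bigr\}=m_{hs_0\ldots s_k}(H,S_0,\ldots,S_k).
\]

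I expect the main technical obstacle to be the bookkeeping of the weighting matrix $\Gamma$: showing simultaneously that $\sum_j\|\Delta_j\|^2$ at the optimum equals exactly $1/(v^*Hv/v^*v)$ and that the consistency conditions are captured precisely by $v^TS_jv=0$ requires expanding the Kronecker products carefully and matching the $\gamma_{j1},\gamma_{j2}$ factors against the powers $\lambda^j,\lambda^{m-j}$; the even-$m$ antipalindromic case is the most delicate because the middle term has half as many degrees of freedom yet still contributes through $\tilde S_{m/2}$. The remaining steps (optimality of the rank-one/two minimum-norm construction, reversibility of the Kronecker identity) are routine once the $\Gamma$-weighting has been verified.
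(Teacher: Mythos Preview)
The paper does not prove this statement: Theorem~\ref{thm:palsbrbmks} is quoted from \cite{BorKMS15} (and \cite{Sha16}) and is merely restated here in the notation needed for the later sections. There is therefore no ``paper's own proof'' to compare against; the paper uses this result as a black box input to Theorems~\ref{thm:sberpal} and~\ref{thm:sberantipal}.

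That said, your outline follows the standard route used in \cite{BorKMS15}, but it contains a genuine gap and a couple of confused intermediate claims. The gap is dimensional: you introduce $v\in\C^n\setminus\{0\}$ as a kernel vector of the perturbed polynomial, yet the matrices $H$ and $S_j$ in the statement are $(m{+}1)n\times(m{+}1)n$, so the vector appearing in $v^TS_jv=0$ and in $v^*Hv/v^*v$ cannot be the same $n$-vector. The missing step is the passage from the pair $(v,x)$ with $x\in\C^n$ a right null vector and $v=(\Lambda_m\otimes I_n)$-lifted data to a single free vector in $\C^{(m{+}1)n}$; in \cite{BorKMS15} this is done by first parametrising the minimal perturbation via an auxiliary vector $u\in\C^{(m{+}1)n}$ and then showing that the constraints on $u$ are exactly the quadratic forms $u^TS_ju=0$. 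Without that change of variable your final displayed identity does not type-check.

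Two smaller points: the chain $\|\Delta_j\|^2+\|\Delta_{m-j}\|^2=\|\Delta_j\|_F^2=\max(\|w_j\|,\|w_{m-j}\|)^2\cdot(2/\|v\|^2)$ is not right as written, since $\Delta_{m-j}=\Delta_j^T$ gives $2\|\Delta_j\|^2$ on the left, which equals $\|\Delta_j\|_F^2$ only for very special $\Delta_j$; the correct statement is that the minimal spectral norm of $\Delta_j$ subject to $\Delta_j v=w_j$, $\Delta_j^Tv=w_{m-j}$ (and the compatibility $v^Tw_j=v^Tw_{m-j}$) is $\max(\|w_j\|,\|w_{m-j}\|)/\|v\|$, and the Frobenius norm plays no role. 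Also, the purpose of $\Gamma$ is not to make $\sum\|\Delta_j\|^2$ equal to $(v^*Hv/v^*v)^{-1}$ for the original $n$-vector, but to renormalise the $(m{+}1)n$-dimensional optimisation variable so that the objective becomes a clean Rayleigh quotient; your description of $\Gamma$'s role is too vague to carry the argument.
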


If we consider the function $\psi : \R^{2k+1}\mapsto \R$ as defined in Theorem~\ref{maintheorem} for the symmetric matrices $S_j$'s or $\tilde S_j$'s of Theorem~\ref{thm:palsbrbmks}, it will always has a global minimum. This is because the symmetric matrices satisfy the rank condition in Theorem~\ref{maintheorem} required for the global minimum of $\psi$. This is shown in the following result for symmetric matrices $S_0,\ldots,S_k$. However, we note that  an analogous result can be obtained for matrices $\tilde S_0,\ldots,\tilde S_k$ using similar arguments. 
This will be used in formulating a computable formula for $\eta^{{\rm pal_T}}(P,\lambda) $ and $\eta^{{\rm antipal_T}}(P,\lambda) $.

\begin{lemma}\label{lem:rankpal}
Let the matrices $ S_0,\ldots, S_k$ be as defined in Theorem~\ref{thm:palsbrbmks}. Then for any $(t_0,\ldots,t_{2k})\in \R^{2k+1}\setminus \{0\}$,
\[
{\rm rank}\left((t_0+it_1)S_0 + (t_2+it_3) S_1+\cdots+(t_{2k-2}+it_{2k-1}) S_{k-1}+ t_{2k} S_k\right) =2n .
\]
\end{lemma}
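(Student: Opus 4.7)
The plan is to strip off the invertible congruence by $\Gamma^{-1}$ (which preserves rank) and then exploit the Kronecker/rank-one structure of each $C_j$. Setting $\alpha_j := t_{2j}+it_{2j+1}$ for $j<k$, $\alpha_k := t_{2k}$, and $v_j := e_{j+1}-e_{m-j+1}$, a direct computation from the definition of $C_j$ gives
\[
C_j+C_j^T = (\Lambda_m^T v_j^T)\otimes M^T + (v_j\Lambda_m)\otimes M.
\]
Summing and writing $w := \sum_{j=0}^k \alpha_j v_j \in \C^{m+1}$, one obtains
\[
\sum_{j=0}^k \alpha_j(C_j+C_j^T) = (\Lambda_m^T w^T)\otimes M^T + (w\Lambda_m)\otimes M.
\]

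Using the identity $(A\otimes B)(C\otimes D)=(AC)\otimes(BD)$, I factor the right-hand side as $XY$ with
\[
X := \begin{bmatrix}\Lambda_m^T\otimes M^T & w\otimes M\end{bmatrix} \in \C^{(m+1)n\times 2n}, \qquad Y := \begin{bmatrix} w^T\otimes I_n \\ \Lambda_m\otimes I_n \end{bmatrix} \in \C^{2n\times (m+1)n}.
\]
Since ${\rm rank}(XY)\leq 2n$ is automatic, the proof reduces to showing that $X$ has full column rank $2n$ and $Y$ has full row rank $2n$: then $Y$ is surjective onto $\C^{2n}$ while $X$ is injective, so the image of $XY$ equals $X(\C^{2n})$, a space of dimension $2n$. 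Since $M=P(\lambda)^{-1}$ is invertible, both rank-$2n$ claims boil down (via uniqueness of the rank-one decomposition of a decomposable tensor $a\otimes b$) to the single assertion that $w$ and $\Lambda_m^T$ are linearly independent in $\C^{m+1}$.

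The main obstacle is therefore this linear independence, under the hypotheses $\lambda\ne 0,\pm 1$ and $(t_0,\ldots,t_{2k})\ne 0$. The supports of $v_0,\ldots,v_k$ are pairwise disjoint (for $j\ne j'$ in $\{0,\ldots,k\}$ the equality $j+1=m-j'+1$ is blocked by $j+j'\leq 2k\leq m-1<m$), so the $v_j$'s are linearly independent and hence $w\ne 0$. Suppose for contradiction that $w = c\Lambda_m^T$ with $c\neq 0$. When $m$ is even, the middle coordinate $m/2+1$ of $w$ is zero (no $v_j$ touches it, since $k = m/2-1 < m/2$), whereas that of $c\Lambda_m^T$ equals $c\lambda^{m/2}\ne 0$, a contradiction. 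When $m$ is odd, comparing the entries at positions $j+1$ and $m-j+1$ forces $\alpha_j = c\lambda^j$ and $-\alpha_j = c\lambda^{m-j}$, hence $\lambda^{m-2j}=-1$ for every $j=0,\ldots,k$; for $k\geq 1$ the choices $j=0$ and $j=1$ combine to give $\lambda^2=1$, and for $m=1$ (the only $m$ odd with $k=0$) the condition reduces to $\lambda=-1$. In every case one contradicts $\lambda\ne\pm 1$, which closes the argument.
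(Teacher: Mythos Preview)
Your proof is correct, and it follows a cleaner route than the paper's argument, though both ultimately rest on the same key fact.

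The paper also strips off $\Gamma^{-1}$ and rewrites the combination as $J_1+J_2$ with $J_1=\Lambda_m^T\Omega_m\otimes M^T$ and $J_2=J_1^T$ (their $\Omega_m$ is your $w^T$). But instead of your direct $XY$ factorization, the paper invokes the rank--sum inequality of Lemma~\ref{changecondition} to reduce ${\rm rank}(J_1+J_2)=2n$ to showing that the stacked matrix $\bigl[\begin{smallmatrix}J_1\\J_2\end{smallmatrix}\bigr]$ has rank $2n$, and then argues linear independence of the row families $\{\Omega_m\otimes c_j^T\}$ and $\{(t_0+it_1)\Lambda_m\otimes r_j\}$ by contradiction, eventually landing on $(t_0+it_1)\Lambda_m=\Omega_m$ and hence $\lambda=\pm1$. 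Your $XY$ factorization bypasses the auxiliary rank lemmas entirely and isolates the single clean statement that $w$ and $\Lambda_m^T$ are linearly independent in $\C^{m+1}$; the coordinate comparison you give (middle entry for $m$ even, the pair of relations $\alpha_j=c\lambda^j$, $-\alpha_j=c\lambda^{m-j}$ for $m$ odd) is more explicit than the paper's endgame and handles the degenerate case $m=1$ separately. In short: same destination, but your factorization is more direct and avoids the detour through Lemmas~\ref{lem:rank1}--\ref{changecondition}.
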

\begin{proof} In view of Theorem~\ref{thm:palsbrbmks}, for any $(t_0,\ldots,t_{2k}) \in \R^{2k+1 }\setminus \{0\}$, we have
\begin{eqnarray}\label{eq:prof11}
&(t_0+it_1)S_0  +\cdots +(t_{2k-2}+it_{2k-1})S_1+t_{2k}S_k =\\
&\Gamma^{-1}\Big(\Lambda_m^T\big((t_0+it_1)e_1^T+\cdots + (t_{2k-2}+it_{2k-1}) e_k^T +t_{2k}e_{k+1}^T- t_{2k}e_{m-k+1}^T - (t_{2k-2}+it_{2k-1}) e_{m-k+2}^T\nonumber \\
&-\cdots 
-(t_0+it_1)e_{m+1}^T\big)\otimes M^T 
+ \big((t_0+it_1)e_1+\cdots + (t_{2k-2}+it_{2k-1}) e_k +t_{2k}e_{k+1}- t_{2k}e_{m-k+1} \nonumber \\
& - (t_{2k-2}+it_{2k-1}) e_{m-k+2}-\cdots - 
(t_0+it_1)e_{m+1}\big) \Lambda_m \otimes M\Big)\Gamma^{-1} \nonumber \\
& =\Gamma^{-1}\left(\Lambda_m^T\Omega_m \otimes M^T+\Omega_m^T\Lambda_m \otimes M
\right)\Gamma^{-1},
\end{eqnarray}
where $\Omega_m:=(t_0+it_1)e_1^T+\cdots + (t_{2k-2}+it_{2k-1}) e_k^T +t_{2k}e_{k+1}^T- t_{2k}e_{m-k+1}^T - (t_{2k-2}+it_{2k-1}) e_{m-k+2}^T-\cdots -(t_0+it_1)e_{m+1}^T$ and $M$, $\Lambda_m$, and $\Gamma$ are as defined in Theorem~\ref{thm:palsbrbmks}. In view of~\eqref{eq:prof11}, 
we will prove that ${\rm rank}((t_0+it_1)S_0  +\cdots +(t_{2k-2}+it_{2k-1})S_1+t_{2k}S_k) =2n$ by showing that 
${\rm rank}(\Lambda_m^T\Omega_m \otimes M^T+\Omega_m^T\Lambda_m \otimes M)=2n$. For this, let  $J_1=\Lambda_m^T\Omega_m \otimes M^T$ and $J_2=\Omega_m^T\Lambda_m \otimes M$. Then $J_2=J_1^T$. Note that the first block row of $J_1$ is $\Omega_m \otimes M^T$ and all other rows are scalar multiple of this row. This implies that 
\[
{\rm rank}\left(J_1\right)= {\rm rank}\left(\Omega_m \otimes M^T\right) =n,
\]
since $\Omega_m \neq 0$ (as not all $t_j$'s zero) and $M$ is invertible. 
Thus also ${\rm rank}(J_2)={\rm rank}(J_1^T)=n$.
The proof is over if we show that ${\rm rank}\left(\mat{c}J_1 \\ J_2\rix\right)=2n$, because this will imply that 
\[
{\rm rank}\left(\mat{cc}J_1 & J_2\rix\right)= {\rm rank}\left(\mat{c}J_1^T \\  J_2^T\rix\right)=
{\rm rank}\left(\mat{c}J_2 \\ J_1\rix\right)={\rm rank}\left(\mat{c}J_1 \\ J_2\rix\right)=2n.
\]
Now from Lemma~\ref{changecondition},
\begin{eqnarray}\label{eq:proof_pallemma1}
{\rm rank}(J_1+J_2) &\geq& {\rm rank}\left(\mat{cc}J_1 & J_2\rix\right) +
{\rm rank}\left(\mat{c}J_1 \\ J_2\rix\right) -
{\rm rank}(J_1) -{\rm rank}(J_2) \nonumber \\ 
&=&  2n+2n-n-n=2n 
\end{eqnarray}
and also 
\begin{equation}\label{eq:proof_pallemma2}
{\rm rank}(J_1+J_2) \leq {\rm rank}(J_1)+{\rm rank}(J_2) =n+n=2n.
\end{equation}
Thus~\eqref{eq:proof_pallemma1} and~\eqref{eq:proof_pallemma2} will then imply that 
\begin{equation*}
{\rm rank}\left( \Lambda_m^T\Omega_m \otimes M^T+\Omega_m^T\Lambda_m \otimes M \right) ={\rm rank}(J_1+J_2)=2n.
\end{equation*}

Therefore in rest of the proof we will show that  ${\rm rank}\left(\mat{c}J_1 \\ J_2\rix\right) =2n$. Note that not all block rows of $J_2$ are zero, since not all
$t_j$'s are zero, w.l.o.g. lets assume that $t_0+it_1 \neq 0$. Consider the first block row of $J_2$ which is $(t_0+it_1)\Lambda_m \otimes M$, then 
\[
{\rm rank}((t_0+it_1)\Lambda_m \otimes M) = n= {\rm rank} (J_2),
\]
since $M$ is invertible. Also the first $n$ rows of $J_1$ are given by $\Omega_m \otimes M^T$, and we have 
\[
{\rm rank}(\Omega_m \otimes M^T) = n= {\rm rank} (J_1).
\]
Thus in order to show that ${\rm rank}\left(\mat{c}J_1 \\ J_2\rix\right) =2n$, it is enough to show that 
\[
{\rm rank}\left(\mat{c}\Omega_m \otimes M^T \\ (t_0+it_1)\Lambda_m \otimes M \rix \right)=2n.
\]
 For this, let $A_j$ and $B_j$ denote the $j^{th}$
row of $\Omega_m \otimes M^T $ and  $(t_0+it_1)\Lambda_m \otimes M$, respectively, and let $r_j$ and $c_j$ denote respectively the $j^{th}$ row and $j^{th}$ column of $M$. Then 
\begin{equation}\label{eq:proof12}
A_j=\Omega_m \otimes c_j^T\quad \text{and} \quad B_j= (t_0+it_1)\Lambda_m \otimes r_j.
\end{equation}
We show that for each $p=1,\ldots,n$, the set $D_p=\left\{B_p,A_1,\ldots,A_n\right\}$ is linearly independent. On contrary, suppose that $D_p$ is linearly dependent for some $p \in \{1,\ldots,n\}$, then there exists scalars $\alpha_{jp} \in \C$ such that 
\[
B_p=\sum_{j=1}^{n} \alpha_{jp}A_j = \sum_{j=1}^n \alpha_{jp} (\Omega_m \otimes c_j^T) = \sum_{j=1}^n (\alpha_{jp}\Omega_m \otimes c_j^T).
\]
That means from~\eqref{eq:proof12}
\[
(t_0+ it_1) \Lambda_m \times r_p = \sum_{j=1}^n (\alpha_{jp}\Omega_m \otimes c_j^T),
\]
or equivalently, 
\begin{eqnarray}\label{eq:proof_pallemma3}
&\mat{cccc}(t_0+ it_1)r_p & (t_0+ it_1) \lambda r_p&\cdots& (t_0+ it_1)\lambda^m r_p\rix \nonumber \\
&=\mat{cccccc} \sum_{j=1}^n \alpha_{jp}(t_0+it_1)c_j^T & \cdots &\sum_{j=1}^n \alpha_{jp}(t_{2k})c_j^T & \sum_{j=1}^n \alpha_{jp}(t_{2k})c_j^T& \cdots &
 \sum_{j=1}^n \alpha_{jp}(t_0+it_1)c_j^T
\rix \nonumber \\
\end{eqnarray}
On comparing the first term in both sides, we get 
\begin{eqnarray*}
(t_0+ it_1)r_p=  (t_0+it_1) \sum_{j=1}^n \alpha_{jp}c_j^T,
\end{eqnarray*}
which implies that 
\begin{equation}\label{eq:proof_pallemma4}
r_p= \sum_{j=1}^n \alpha_{jp}c_j^T.
\end{equation}
Substituting~\eqref{eq:proof_pallemma4} in~\eqref{eq:proof_pallemma3}, we get
\begin{equation}\label{eq:proof_pallemma5}
(t_0+ it_1) \Lambda_m \times r_p= \Omega_m \otimes r_p \quad
\Longrightarrow \quad  (t_0+ it_1) \Lambda_m = \Omega_m.
\end{equation}
Again comparing both sides in~\eqref{eq:proof_pallemma5} and using the fact that $(t_0+it_1)\neq 0$, we get either $\lambda=1$ or $\lambda=-1$, which is a contradiction, since $\lambda \in \C\setminus \{0,\pm 1\}$. Hence
$\{B_p,A_1,\ldots,A_n\}$ is linearly independent for each $p=1,\ldots,n$. Also 
since $\{B_1,\ldots,B_n\}$ is linearly independent, we have that 
$\{B_1,\ldots,B_n,A_1,\ldots,A_n\}$ is linearly independent. This implies that 
${\rm rank}\left(\mat{c}J_1 \\ J_2\rix\right) =2n$ which completes the proof. 
\end{proof}

An application of Theorem~\ref{maintheorem} in Theorem~\ref{thm:palsbrbmks} yields a computable formula for the structured eigenvalue backward errors of palindromic polynomials. More precisely, we have the following results. 

\begin{theorem}\label{thm:sberpal}
Let $P(z)=\sum_{j=0}^mz^j A_j$ be a regular matrix polynomial with $(A_0,\ldots,A_m) \in {\rm pal_T}$. Suppose that $\lambda \in \C\setminus \{0,\pm 1\}$ be such that $M:=(P(\lambda))^{-1}$ exists. Define $k=\lfloor{\frac{m-1}{2}}\rfloor$ and let 
$\psi: \R^{2k+1}\mapsto \R$ be defined by $(t_0,\ldots,t_{2k})\mapsto \lambda_2(F(t_0,\ldots,t_{2k}))$, where $F(t_0,\ldots,t_{2k})$  is as defined in Theorem~\ref{maintheorem} for matrices $H,S_0,\ldots,S_k$ of Theorem~\ref{thm:palsbrbmks}.
 Then 
\begin{itemize}
\item The function $\psi$ has a global minimum
\[
\widehat \lambda_2 := \min_{(t_0,\ldots,t_{2k})\in \R^{2k+1}} \psi (t_0,\ldots,t_{2k}) \quad \text{and}\quad 
\eta^{{\rm pal_T}}(P,\lambda) \geq \frac{1}{\sqrt{\widehat \lambda_2}}.
\]
\item Moreover, if the minimum $\widehat \lambda_2$ of $\psi$  is attained 
at $(\hat t_0,\ldots,\hat t_{2k}) \in \R^{2k+1}$ and is a simple eigenvalue of $F(\hat t_0,\ldots,\hat t_{2k}) $, then 
\[
\eta^{{\rm pal_T}}(P,\lambda) = \frac{1}{\sqrt{\widehat \lambda_2}}.
\]
\end{itemize}
\end{theorem}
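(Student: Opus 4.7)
The plan is to assemble three results already established in the paper. Theorem~\ref{thm:palsbrbmks} reduces the computation of $\eta^{{\rm pal_T}}(P,\lambda)$ to an instance of the optimization problem~\eqref{eq:cenprob} with $k+1$ symmetric constraints; Lemma~\ref{lem:rankpal} verifies the rank hypothesis needed to invoke Theorem~\ref{maintheorem}; and Theorem~\ref{maintheorem} in turn expresses $m_{hs_0\ldots s_k}$ as a minimization of the second-largest eigenvalue of the Hermitian matrix $F(t_0,\ldots,t_{2k})$. Combining these pieces yields the result directly.

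First I would apply Theorem~\ref{thm:palsbrbmks} (T-palindromic case), which is available since $\lambda\in\C\setminus\{0,\pm 1\}$ and $M=(P(\lambda))^{-1}$ exists, to write
\[
\eta^{{\rm pal_T}}(P,\lambda) = \bigl(m_{hs_0\ldots s_k}(H,S_0,\ldots,S_k)\bigr)^{-1/2},
\]
with the matrices $H,S_0,\ldots,S_k$ as defined there. Next I would check the rank hypothesis of Theorem~\ref{maintheorem}(1): by Lemma~\ref{lem:rankpal}, for every $(t_0,\ldots,t_{2k})\in\R^{2k+1}\setminus\{0\}$ the matrix
\[
f(t_0,\ldots,t_{2k}) = (t_0+it_1)S_0 + \cdots + (t_{2k-2}+it_{2k-1})S_{k-1} + t_{2k}S_k
\]
has rank $2n$, which is certainly $\geq 2$. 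Consequently, Theorem~\ref{maintheorem}(1) applies and shows that $\psi$ attains its infimum on the compact ball of radius $\beta$, so the global minimum $\widehat\lambda_2$ exists and satisfies
\[
m_{hs_0\ldots s_k}(H,S_0,\ldots,S_k) \leq \widehat\lambda_2.
\]
Taking the reciprocal square root gives $\eta^{{\rm pal_T}}(P,\lambda)\geq 1/\sqrt{\widehat\lambda_2}$, which establishes the first bullet.

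For the second bullet, I would invoke Theorem~\ref{maintheorem}(2) at the minimizer $(\hat t_0,\ldots,\hat t_{2k})$: under the additional hypothesis that $\widehat\lambda_2$ is a simple eigenvalue of $F(\hat t_0,\ldots,\hat t_{2k})$, the inequality above becomes an equality, yielding $m_{hs_0\ldots s_k}(H,S_0,\ldots,S_k)=\widehat\lambda_2$ and hence $\eta^{{\rm pal_T}}(P,\lambda)=1/\sqrt{\widehat\lambda_2}$. There is no serious obstacle, since the analytical work has already been carried out upstream; the only point requiring mild care is to confirm that the rank bound of Lemma~\ref{lem:rankpal} ($2n$, well in excess of $2$) is exactly what Theorem~\ref{maintheorem} requires, which is immediate from its statement.
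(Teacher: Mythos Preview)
Your proposal is correct and follows essentially the same route as the paper: invoke Lemma~\ref{lem:rankpal} to verify the rank hypothesis, apply Theorem~\ref{maintheorem} to obtain existence of the minimum and the bound (with equality under the simplicity assumption), and combine with Theorem~\ref{thm:palsbrbmks} to translate back to $\eta^{{\rm pal_T}}(P,\lambda)$.
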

\begin{proof}
In view of Lemma~\ref{lem:rankpal}, the matrices $ S_0,\ldots, S_k$
satisfy that for any $(t_0,\ldots,t_{2k}) \in \R^{2k+1}\setminus \{0\}$,
\[
\text{rank}\left((t_0+it_1) S_0 + (t_2+it_3)S_1+\cdots+(t_{2p-2}+it_{2p-1}) S_{k-1}+ t_{2p} S_k\right) =2n \geq 2,
\]
since $n \geq 1$. Thus from Theorem~\ref{maintheorem}, $\psi$ attains its global minimum. Now the proof is immediate by using Theorem~\ref{maintheorem} in Theorem~\ref{thm:palsbrbmks}.
\end{proof}

\begin{theorem}\label{thm:sberantipal}
Let $P(z)=\sum_{j=0}^mz^j A_j$ be a regular matrix polynomial with $(A_0,\ldots,A_m) \in {\rm antipal_T}$. Suppose that $\lambda \in \C\setminus \{0,\pm 1\}$ be such that $M:=(P(\lambda))^{-1}$ exists. Define $k=\lfloor{\frac{m-1}{2}}\rfloor$ and let $p=k$ if $m$ is odd and $p=k+1$ if $m$ is even.  
Let 
$\psi: \R^{2p+1}\mapsto \R$ be defined by $(t_0,\ldots,t_{2p})\mapsto \lambda_2(F(t_0,\ldots,t_{2p}))$, where $F(t_0,\ldots,t_{2p})$  is as defined in Theorem~\ref{maintheorem} for matrices $H,\tilde S_0,\ldots,\tilde S_p$ of Theorem~\ref{thm:palsbrbmks}.
 Then 
\begin{itemize}
\item The function $\psi$ has a global minimum
\[
\widehat \lambda_2 := \min_{(t_0,\ldots,t_{2p})\in \R^{2p+1}} \psi (t_0,\ldots,t_{2p}) \quad \text{and}\quad 
\eta^{{\rm antipal_T}}(P,\lambda) \geq \frac{1}{\sqrt{\widehat \lambda_2}}.
\]
\item Moreover, if the minimum $\widehat \lambda_2$ of $\psi$  is attained 
at $(\hat t_0,\ldots,\hat t_{2p}) \in \R^{2p+1}$ and is a simple eigenvalue of $F(\hat t_0,\ldots,\hat t_{2p}) $, then 
\[
\eta^{{\rm antipal_T}}(P,\lambda) = \frac{1}{\sqrt{\widehat \lambda_2}}.
\]
\end{itemize}
\end{theorem}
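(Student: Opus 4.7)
The plan is to mirror the proof of Theorem~\ref{thm:sberpal}, the only genuinely new ingredient being an analogue of Lemma~\ref{lem:rankpal} for the symmetric matrices $\tilde S_0,\ldots,\tilde S_p$ defined in Theorem~\ref{thm:palsbrbmks}. Once the rank condition is in hand, the existence of the global minimum $\widehat\lambda_2$ follows from Theorem~\ref{maintheorem}(1), and the equality in the simple-eigenvalue case follows by combining Theorem~\ref{maintheorem}(2) with the reduction $\eta^{{\rm antipal_T}}(P,\lambda)=m_{h\tilde s_0\ldots \tilde s_p}(H,\tilde S_0,\ldots,\tilde S_p)^{-1/2}$ supplied by Theorem~\ref{thm:palsbrbmks}.

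The first step is therefore to verify that for every $(t_0,\ldots,t_{2p})\in\R^{2p+1}\setminus\{0\}$,
\[
{\rm rank}\bigl((t_0+it_1)\tilde S_0+\cdots+(t_{2k-2}+it_{2k-1})\tilde S_{k-1}+t_{2k}\tilde S_k+t_{2k+1}\tilde S_{k+1}\bigr)\;\geq\;2,
\]
where the last term is present only in the even-$m$ case (and encodes $\tilde S_{m/2}$ with a single real parameter, since $\tilde S_{m/2}=\Gamma^{-1}\tilde C_{m/2}\Gamma^{-1}$ with $\tilde C_{m/2}$ already symmetric). Following the template of Lemma~\ref{lem:rankpal}, I would write this combination as $\Gamma^{-1}\bigl(\Lambda_m^T\widetilde\Omega_m\otimes M^T+\widetilde\Omega_m^T\Lambda_m\otimes M\bigr)\Gamma^{-1}$ for a suitable row-vector $\widetilde\Omega_m\in\C^{1\times(m+1)}$ whose nonzero entries are obtained from those in the palindromic case by \emph{flipping the sign on the second half} (and, in the even case, adding a central entry $t_{2k+1}$). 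Setting $J_1=\Lambda_m^T\widetilde\Omega_m\otimes M^T$ and $J_2=\widetilde\Omega_m^T\Lambda_m\otimes M=J_1^T$, one has ${\rm rank}(J_1)={\rm rank}(J_2)=n$ since $M$ is invertible and $\widetilde\Omega_m\neq 0$.

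By the same sandwich argument as in Lemma~\ref{lem:rankpal}---Lemma~\ref{changecondition} gives the lower bound and subadditivity gives the upper bound ${\rm rank}(J_1+J_2)\leq 2n$---it suffices to prove ${\rm rank}\bigl(\bigl[\begin{smallmatrix}J_1\\J_2\end{smallmatrix}\bigr]\bigr)=2n$, and for this I would pick any block-row of $J_2$ whose scalar coefficient is nonzero and run the linear-independence argument row by row. Exactly as in Lemma~\ref{lem:rankpal}, a putative dependence $B_p=\sum_j\alpha_{jp}A_j$ is forced by comparison of the first Kronecker block to reduce to an identity of the form $c\,\Lambda_m=\widetilde\Omega_m$ for some nonzero scalar $c$; comparing entries then forces either $\lambda=1$ or $\lambda=-1$, contradicting the assumption $\lambda\notin\{0,\pm 1\}$. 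The main obstacle is bookkeeping: the antipalindromic sign pattern and the extra middle term in the even case slightly change which entries of $\widetilde\Omega_m$ appear on the left versus the right of center, and one must check that the final scalar identity still forces $\lambda=\pm 1$. A small case split (odd $m$ / even $m$) handles this cleanly.

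With the rank condition verified, the conclusion is immediate: Theorem~\ref{maintheorem}(1) guarantees that $\psi$ attains a global minimum $\widehat\lambda_2$ on the compact ball of radius $\beta$, yielding together with Theorem~\ref{thm:palsbrbmks} the inequality $\eta^{{\rm antipal_T}}(P,\lambda)\geq 1/\sqrt{\widehat\lambda_2}$. If in addition $\widehat\lambda_2$ is a simple eigenvalue of $F(\hat t_0,\ldots,\hat t_{2p})$, then Theorem~\ref{maintheorem}(2) produces $v\in\C^n\setminus\{0\}$ with $v^*Hv/v^*v=\widehat\lambda_2$ and $v^T\tilde S_j v=0$ for all $j$, so equality holds in~\eqref{eq:summary1} and hence $\eta^{{\rm antipal_T}}(P,\lambda)=1/\sqrt{\widehat\lambda_2}$.
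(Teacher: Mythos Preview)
Your proposal is correct and follows exactly the route the paper intends: its own proof is the single line ``The proof follows by arguments similar to those of Theorem~\ref{thm:sberpal},'' and you have simply spelled out those arguments, including the antipalindromic analogue of Lemma~\ref{lem:rankpal} that the paper only asserts in passing. One minor bookkeeping slip: in the even-$m$ case with $p=k+1$, the parametrization~\eqref{eq:defmat1} assigns $\tilde S_k$ the complex coefficient $(t_{2k}+it_{2k+1})$ and $\tilde S_{p}=\tilde S_{m/2}$ the real coefficient $t_{2p}=t_{2k+2}$, rather than $t_{2k}\tilde S_k+t_{2k+1}\tilde S_{k+1}$ as you wrote---but this does not affect the rank argument, since in either parametrization $\widetilde\Omega_m\neq 0$ whenever the $t$'s are not all zero.
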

\begin{proof}
The proof follows by arguments similar to those of Theorem~\ref{thm:sberpal}.
\end{proof}

\subsection{T-even and T-odd polynomials}\label{sec:altsber}

A matrix  polynomial $P(z)=\sum_{j=0}^mz^jA_j$ is called T-even (T-odd) if $A_j^T=(-1)^jA_j$ $\left(A_j^T=(-1)^{j+1}A_j\right)$, for $j=0,\ldots,m$.
Let $\mathbb S_e$($\mathbb S_o$) denote the subset of $(\C^{n,n})^{m+1}$ such that $(\Delta_0,\ldots,\Delta_m) \in 
\mathbb S_e (\mathbb S_o)$ implies that $\Delta_j^T=(-1)^j\Delta_j$ $\left(\Delta_{j}^T=(-1)^{j+1}\Delta_{j}\right)$, for $j=0,\ldots,m$, and let the corresponding eigenvalue backward errors from~\eqref{eq:defsber} be denoted by $\eta^{{\rm even_T}}(P,\lambda)$ when $\mathbb S= \mathbb S_e$ and $\eta^{{\rm odd_T}}(P,\lambda)$ when $\mathbb S=\mathbb S_o$.  Then from~\cite[Theorem 3.4.1]{Sha16} computing $\eta^{{\rm even_T}}(P,\lambda)$ and $\eta^{{\rm odd_T}}(P,\lambda)$ reduced to a problem of the form~\eqref{eq:cenprob}. We state this as a result in the following.

\begin{theorem}{\rm \cite{Sha16}}\label{thm:reduction_alt}
Let $P(z)=\sum_{j=0}^mz^jA_j$ be a regular matrix polynomial, where 
$(A_0,\ldots,A_m)\in \mathbb S \subseteq (\C^{n,n})^{m+1}$. Let $\lambda \in \C\setminus \{0\}$ such that $M:=(P(\lambda))^{-1}$ exists. Define 
$\Lambda_m:=[1\, \lambda\, \ldots \, \lambda^m] \in \C^{1 \times (m+1)}$ and $H:=\Lambda_m^*\Lambda_m \otimes M^*M$. 
\begin{itemize}
\item {\rm (when $\mathbb S=\mathbb S_e$)} Let $k=\left\lfloor \frac{m-1}{2} \right\rfloor$ and for each $j=0,\ldots,k$, let
\[
C_{ej}:=(\Lambda^T_me^T_{2j+2})\otimes M^T,\quad \text{and}\quad  S_{ej}:=C_{ej}+C_{ej}^T,
\]
where $e_j$ denotes the $j^{th}$ standard basis vector of $\R^{m+1}$.
Then 
\[
\eta^{{\rm even_T}}(P,\lambda)=(m_{hs_{e0}\ldots s_{ek}}(H,S_{e0},\ldots,S_{ek}))^{-\frac{1}{2}}.
\]
\item {\rm (when $\mathbb S=\mathbb S_o$)} Let $k=\left \lfloor \frac{m}{2} \right\rfloor$ and for each $j=0,\ldots,k$, let
\[
C_{oj}:=(\Lambda^T_me^T_{2j+1})\otimes M^T \quad \text{and}\quad  S_{oj}:=C_{oj}+C_{oj}^T.
\]
Then 
\[
\eta^{{\rm odd_T}}(P,\lambda)=(m_{hs_{o0}\ldots s_{ok}}(H,S_{o0},\ldots,S_{ok}))^{-\frac{1}{2}}.
\]

\end{itemize}
\end{theorem}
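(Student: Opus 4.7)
Plan: I would focus on the T-even case; the T-odd argument is parallel, with the roles of even- and odd-indexed $v_j$ interchanged. The strategy is to unpack the Kronecker expressions for $H$ and $S_{ej}$ so that $v^*Hv$ and the constraints $v^TS_{ej}v=0$ acquire intrinsic meanings, and then to match each feasible $v$ with a structured perturbation of optimal cost via the classical complex symmetric and skew-symmetric spectral-norm mapping results.

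\textbf{Step 1 (reformulation).} Write $v = \mathrm{vec}(V)$ with $V=[v_0\,v_1\,\ldots\,v_m]\in\C^{n,m+1}$ and set $w := V\Lambda_m^T = \sum_{j=0}^m \lambda^j v_j$. The Kronecker identity $(A\otimes B)\,\mathrm{vec}(X)=\mathrm{vec}(BXA^T)$ gives $v^*Hv = \|Mw\|^2$, while the symmetric splitting $v^TS_{ej}v = 2v^TC_{ej}v$ together with a direct computation yields $v^TS_{ej}v = 2(Mw)^T v_{2j+1}$ for $j=0,\ldots,k$. Hence, writing $u := Mw$, the constraints $v^TS_{ej}v=0$ are exactly $u^T v_{2j+1}=0$ for every odd-indexed $v_{2j+1}$, i.e.\ precisely the compatibility relations required for a complex skew-symmetric matrix to map $u$ to $v_{2j+1}$.

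\textbf{Step 2 ($\eta^{{\rm even_T}}(P,\lambda)^{-2}\geq m_{hs_{e0}\ldots s_{ek}}$).} Given a feasible $v$, set $u := Mw$. The classical structured mapping results (see, e.g., the skew-symmetric construction in~\cite{Kar11} and its complex symmetric analogue) produce, for each $j=0,\ldots,m$, a matrix $\Delta_j\in\C^{n,n}$ with $\Delta_j^T = (-1)^j \Delta_j$, $\Delta_j u = v_j$, and $\|\Delta_j\|=\|v_j\|/\|u\|$; the compatibility required for odd $j$ is exactly the one produced in Step~1. Then $(\Delta_0,\ldots,\Delta_m)\in\mathbb S_e$ and $\sum_j \lambda^j \Delta_j u = w = P(\lambda)u$, so $\lambda$ is an eigenvalue of the perturbed polynomial. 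Consequently
\[
\eta^{{\rm even_T}}(P,\lambda)^2 \,\le\, \sum_j\|\Delta_j\|^2 \,=\, \frac{\sum_j\|v_j\|^2}{\|u\|^2} \,=\, \frac{v^*v}{v^*Hv},
\]
and supremizing over feasible $v$ yields the claimed inequality.

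\textbf{Step 3 (reverse direction and main obstacle).} Conversely, take any $(\Delta_0,\ldots,\Delta_m)\in\mathbb S_e$ and a nonzero null vector $u$ of $\sum_j \lambda^j(A_j-\Delta_j)$, and set $v_j := \Delta_j u$, $v := \mathrm{vec}([v_0,\ldots,v_m])$. Skew-symmetry of $\Delta_{2j+1}$ gives $u^T v_{2j+1} = u^T\Delta_{2j+1}u = 0$, so $v$ is feasible. From $P(\lambda)u=\sum_j \lambda^j\Delta_j u$ we get $w = P(\lambda)u$, hence $Mw=u$ and $v^*Hv=\|u\|^2$; combining with $\|v_j\|^2\leq\|\Delta_j\|^2\|u\|^2$ gives $v^*Hv/v^*v \geq 1/\sum_j\|\Delta_j\|^2$. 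Supremizing in $v$ and infimizing in $(\Delta_j)$ closes the chain. The main obstacle is Step~2: one must verify that the minimum spectral norm $\|v_j\|/\|u\|$ is attained by a matrix of the prescribed complex symmetric or skew-symmetric type. This rests on known structured-mapping theorems rather than on a direct algebraic construction, and for the T-odd case an entirely analogous argument applies with even- and odd-indexed $v_j$'s swapping roles.
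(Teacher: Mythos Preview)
The paper does not supply its own proof of this theorem: it is quoted verbatim from~\cite{Sha16} (specifically \cite[Theorem~3.4.1]{Sha16}) and used as a black box. So there is no in-paper argument to compare against; your proposal is being judged on its own merits.

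Your outline is correct and is in fact the standard route to such reductions. The vec/Kronecker computation in Step~1 is right: with $v=\mathrm{vec}(V)$ and $w=V\Lambda_m^T$ one has $(\Lambda_m\otimes M)v=Mw$, hence $v^*Hv=\|Mw\|^2$, and $v^TC_{ej}v=(Mw)^Tv_{2j+1}$ follows from the same identity, giving $v^TS_{ej}v=2u^Tv_{2j+1}$ with $u=Mw$. Steps~2 and~3 then hinge, exactly as you say, on the complex symmetric and complex skew-symmetric minimal-norm mapping theorems: for any $u\neq0$ and any target $v_j$, a complex symmetric $\Delta_j$ with $\Delta_ju=v_j$ and $\|\Delta_j\|=\|v_j\|/\|u\|$ always exists, while a complex skew-symmetric one exists (with the same norm) if and only if $u^Tv_j=0$. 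These are precisely the structured mapping results used in~\cite{BorKMS15,Sha16}, so you are reconstructing the intended argument.

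Two small points worth making explicit when you write this up. First, in Step~2 you need $u\neq0$; this is harmless because $v^*Hv=\|u\|^2$, so vectors with $u=0$ contribute nothing to the supremum. Second, in Step~3 you need $v\neq0$; this holds because $\sum_j\lambda^j\Delta_ju=P(\lambda)u\neq0$ by invertibility of $P(\lambda)$, so not all $v_j$ can vanish. With these checks your chain of inequalities closes and gives equality. The T-odd case is, as you note, identical with the parity of the constrained indices swapped.
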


Next, we obtain a result similar to Lemma~\ref{lem:rankpal} for matrices
$S_{e0},\ldots, S_{ek}$ of Theorem~\ref{thm:reduction_alt}. This will be used in computing $\eta^{{\rm even_T}}(P,\lambda)$ and $\eta^{{\rm odd_T}}(P,\lambda)$. Note that an analogous result for $S_{o0},\ldots, S_{ok}$ follows similarly. 

\begin{lemma}\label{lem:rank_alt}
Let the matrices $S_{e0},\ldots, S_{ek}$ be as defined in Theorem~\ref{thm:reduction_alt}. Then for any $(t_0,\ldots,t_{2k})\in \R^{2k+1}\setminus \{0\}$,
\[
{\rm rank}\left((t_0+it_1)S_{e0} + (t_2+it_3) S_{e1}+\cdots+(t_{2k-2}+it_{2k-1}) S_{ek-1}+ t_{2k} S_{ek}\right) \geq 2n .
\]
\end{lemma}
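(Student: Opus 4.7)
The plan is to mirror the strategy of the proof of Lemma~\ref{lem:rankpal}, with simplifications that arise from the structure of the T-even case. Using $S_{ej}=C_{ej}+C_{ej}^T$ with $C_{ej}=(\Lambda_m^T e_{2j+2}^T)\otimes M^T$, I would write
\[
(t_0+it_1)S_{e0}+(t_2+it_3)S_{e1}+\cdots+(t_{2k-2}+it_{2k-1})S_{e,k-1}+t_{2k}S_{ek}=J_1+J_2,
\]
where $J_1=\Lambda_m^T\Omega_m\otimes M^T$, $J_2=J_1^T=\Omega_m^T\Lambda_m\otimes M$, and
\[
\Omega_m=\sum_{j=0}^{k-1}(t_{2j}+it_{2j+1})e_{2j+2}^T+t_{2k}e_{2k+2}^T.
\]
Since not all $t_j$ are zero, $\Omega_m\neq 0$, and so $\mathrm{rank}(J_1)=\mathrm{rank}(J_2)=n$ because $M$ is invertible.

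Next, I would invoke the rank inequality derived inside the proof of Lemma~\ref{changecondition}, namely
\[
\mathrm{rank}(J_1+J_2)\geq\mathrm{rank}\begin{bmatrix}J_1&J_2\end{bmatrix}+\mathrm{rank}\begin{bmatrix}J_1\\J_2\end{bmatrix}-\mathrm{rank}(J_1)-\mathrm{rank}(J_2).
\]
Since $J_2=J_1^T$, the two block-matrix ranks on the right are equal by transposition, so it suffices to show $\mathrm{rank}\begin{bmatrix}J_1\\J_2\end{bmatrix}\geq 2n$; this will yield $\mathrm{rank}(J_1+J_2)\geq 2n+2n-n-n=2n$, as required.

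To prove $\mathrm{rank}\begin{bmatrix}J_1\\J_2\end{bmatrix}\geq 2n$, I would select the first block row of $J_1$, which equals $\Omega_m\otimes M^T$ and has rank $n$, together with the block row of $J_2$ at position $2s+2$, where $s$ is chosen so that $\alpha_s:=(\Omega_m^T)_{2s+2}\neq 0$; this block row equals $\alpha_s\Lambda_m\otimes M$ and also has rank $n$. To see that these $2n$ rows are jointly linearly independent, assume for contradiction that some $p$-th row $\alpha_s\Lambda_m\otimes r_p$ of the second block is a linear combination $\sum_{j=1}^n\beta_j(\Omega_m\otimes c_j^T)$ of rows of the first block, where $r_p$ and $c_j$ denote the $p$-th row and $j$-th column of $M$. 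Comparing the first $n$-entry block on both sides (i.e., the coefficient at position $\ell=1$) yields
\[
\alpha_s r_p=(\Omega_m)_1\sum_{j=1}^n\beta_j c_j^T=0,
\]
since $(\Omega_m)_1=0$ for the T-even $\Omega_m$, whose nonzero entries sit only at even indices. This contradicts $\alpha_s\neq 0$ and $r_p\neq 0$, so the $2n$ rows are linearly independent.

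The main obstacle is purely bookkeeping through the Kronecker-product block structure. Conceptually, the T-even case is strictly easier than Lemma~\ref{lem:rankpal}: the vanishing first entry of $\Omega_m$ defeats any putative dependence at the very first block comparison, and therefore no case analysis on $\lambda$ (such as the $\lambda\neq\pm 1$ restriction used in the palindromic argument) is required here.
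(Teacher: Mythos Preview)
Your approach is correct and hinges on the same key observation the paper uses---that $(\Omega_m)_1=0$---but you take a longer route. You mirror Lemma~\ref{lem:rankpal} by passing through the rank inequality of Lemma~\ref{changecondition} and then proving that the stacked matrix $\begin{bmatrix}J_1\\J_2\end{bmatrix}$ has rank at least $2n$. The paper instead works directly with the block structure of $J_1+J_1^T$: since $(\Omega_m)_1=0$, the $(1,1)$ block of $J_1+J_1^T$ vanishes, while for any index $j$ with $(\Omega_m)_j\neq 0$ the $(1,j)$ block equals $(\Omega_m)_j M^T$, which is invertible. Hence the first and $j$-th block columns of $J_1+J_1^T$ already span a $2n$-dimensional space, and no appeal to Lemma~\ref{changecondition} is needed. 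Your detour is valid, but the direct column argument is shorter and avoids the auxiliary rank inequality altogether.

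One point in your write-up needs tightening: showing that each individual row $B_p=\alpha_s\Lambda_m\otimes r_p$ lies outside $\mathrm{span}\{A_1,\ldots,A_n\}$ does \emph{not} by itself yield linear independence of the full set $\{A_1,\ldots,A_n,B_1,\ldots,B_n\}$, even knowing that the $B_p$'s are independent among themselves. The correct step is to assume a general relation $\sum_p\gamma_p B_p=\sum_j\beta_j A_j$, i.e., $\alpha_s\Lambda_m\otimes r=\Omega_m\otimes c$ with $r=\sum_p\gamma_p r_p$ and $c=\sum_j\beta_j c_j^T$; your first-block comparison then gives $\alpha_s r=(\Omega_m)_1\, c=0$, forcing $r=0$ and hence all $\gamma_p=0$. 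With this adjustment your argument is complete.
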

\begin{proof}
For any $(t_0,\ldots,t_{2k})\in \R^{2k+1}\setminus \{0\}$, we have
\begin{equation}\label{eq:prooflemalt1}
\left((t_0+it_1)S_{e0} + (t_2+it_3) S_{e1}+\cdots+(t_{2k-2}+it_{2k-1}) S_{ek-1}+ t_{2k} S_{ek}\right) =\Lambda_m^T \Omega_m \otimes M^T + \Omega_m^T \Lambda_m \otimes M,
\end{equation}
where $\Lambda_m:=[1\, \lambda\, \ldots \, \lambda^m] \in \C^{1 \times (m+1)}$ and
\begin{equation*}
\Omega_m:= \begin{cases} 
\left [0 ~t_0+it_1~ 0~t_2+it_3~0~\cdots 0~t_{2k-2}+it_{2k-1}~0~t_{2k}\right ]      
       & \text{if m is odd} \\
      \left [0 ~t_0+it_1~ 0~t_2+it_3~0~\cdots ~0~t_{2k-2}+it_{2k-1}~0~t_{2k}~0 \right ]   & \text{if m is even}.
      \end{cases}
\end{equation*}
Set 
\[
J_1:=\Lambda_m^T\Omega_m \otimes M^T=
\begin{bmatrix}
0 & (t_0+it_1)M^T & \cdots & 0 & t_{2k}M^T\\
0 & \lambda (t_0+it_1)M^T & \cdots & 0 & \lambda t_{2k}M^T\\
\vdots & \vdots &  & \vdots & \vdots \\
0 & \lambda^m (t_0+it_1)M^T & \cdots & 0 & \lambda^mt_{2k} M^T
\end{bmatrix},
\]
when $m$ is odd, and 
\[
J_1:=\Lambda_m^T\Omega_m \otimes M^T=
\begin{bmatrix}
0 & (t_0+it_1)M^T & \cdots & 0 & t_{2k}M^T&0\\
0 & \lambda (t_0+it_1)M^T & \cdots & 0 & \lambda t_{2k}M^T & 0\\
\vdots & \vdots &  & \vdots & \vdots& \vdots \\
0 & \lambda^m (t_0+it_1)M^T & \cdots & 0 & \lambda^mt_{2k} M^T & 0
\end{bmatrix},
\]
when $m$ is even, 
then 
\begin{equation}\label{eq:prooflemalt2}
\Lambda_m^T \Omega_m \otimes M^T + \Omega_m^T \Lambda_m \otimes M=J_1+J_1^T.
\end{equation}
Observe that, the first block row of $J_1$ is nonzero as not all $t_j$'s are zero. Suppose that $j^{th}$ block entry of the first row is nonzero, then the first and $j^{th}$ block columns of $J_1+J_1^T$ are linearly independent. This shows that $\text{rank}(J_1+J_1^T) \geq 2n$. Thus the result follows from~\eqref{eq:prooflemalt1} and~\eqref{eq:prooflemalt2}.
\end{proof}

As an application of Theorem~\ref{maintheorem} in Theorem~\ref{thm:reduction_alt} yields a computable formula for the structured eigenavalue backward error of T-even and T-odd polynomials. For future reference, we state the results separately for $\eta^{{\rm even_T}}(P,\lambda)$ and $\eta^{{\rm odd_T}}(P,\lambda)$.

\begin{theorem}\label{thm:formulaTeven}
Let $P(z)=\sum_{j=0}^mz^jA_j$ be a regular T-even matrix polynomial, where 
$(A_0,\ldots,A_m)\in \mathbb S_e $. Let $\lambda \in \C \setminus \{0\}$
be such that $M:=(P(\lambda))^{-1}$ exists. Define 
 $k=\left \lfloor{\frac{m-1}{2}}\right \rfloor$ and let 
$\psi: \R^{2k+1}\mapsto \R$ be defined by $(t_0,\ldots,t_{2k})\mapsto \lambda_2(F(t_0,\ldots,t_{2k}))$, where $F(t_0,\ldots,t_{2k})$  is as defined in Theorem~\ref{maintheorem} for matrices $H,S_{e0},\ldots,S_{ek}$ of Theorem~\ref{thm:reduction_alt}. Then 
\begin{itemize}
\item The function $\psi$ has a global minimum
\[
\widehat \lambda_2 := \min_{(t_0,\ldots,t_{2k})\in \R^{2k+1}} \psi (t_0,\ldots,t_{2k}) \quad \text{and}\quad 
\eta^{{\rm even_T}}(P,\lambda) \geq \frac{1}{\sqrt{\widehat \lambda_2}}.
\]
\item Moreover, if the minimum $\widehat \lambda_2$ of $\psi$  is attained 
at $(\hat t_0,\ldots,\hat t_{2k}) \in \R^{2k+1}$ and is a simple eigenvalue of $F(\hat t_0,\ldots,\hat t_{2k}) $, then 
\[
\eta^{{\rm even_T}}(P,\lambda) = \frac{1}{\sqrt{\widehat \lambda_2}}.
\]
\end{itemize}
\end{theorem}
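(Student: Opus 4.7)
The plan is to simply apply Theorem~\ref{maintheorem} to the triple $(H, S_{e0}, \ldots, S_{ek})$ supplied by Theorem~\ref{thm:reduction_alt}, and then convert the resulting bound on $m_{hs_{e0}\ldots s_{ek}}$ into a bound on $\eta^{{\rm even_T}}(P,\lambda)$ via the relation
\[
\eta^{{\rm even_T}}(P,\lambda) = \bigl(m_{hs_{e0}\ldots s_{ek}}(H, S_{e0}, \ldots, S_{ek})\bigr)^{-1/2}.
\]
The proof will essentially mirror that of Theorem~\ref{thm:sberpal}, the only case-specific ingredient being the verification of the rank hypothesis of Theorem~\ref{maintheorem} for the T-even symmetric matrices.

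First, I would check that the hypothesis ``${\rm rank}(f(t_0,\ldots,t_{2k})) \geq 2$ for all nonzero $(t_0,\ldots,t_{2k}) \in \mathbb{R}^{2k+1}$'' is satisfied, where $f$ is the linear combination defined in~\eqref{eq:defmat1} with $S_j = S_{ej}$. This is exactly the content of Lemma~\ref{lem:rank_alt}, which states that this rank is at least $2n$, hence at least $2$ since $n \geq 1$.

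Given this rank condition, part~1 of Theorem~\ref{maintheorem} immediately implies that $\psi$ attains a global minimum $\widehat\lambda_2$ in the compact region $t_0^2 + \cdots + t_{2k}^2 \leq \beta^2$, and moreover that
\[
m_{hs_{e0}\ldots s_{ek}}(H, S_{e0}, \ldots, S_{ek}) \leq \widehat\lambda_2.
\]
Taking reciprocal square roots and invoking Theorem~\ref{thm:reduction_alt} yields the first assertion $\eta^{{\rm even_T}}(P,\lambda) \geq 1/\sqrt{\widehat\lambda_2}$.

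For the equality assertion, I would invoke part~2 of Theorem~\ref{maintheorem}: if the minimum is attained at $(\hat t_0, \ldots, \hat t_{2k})$ and $\widehat\lambda_2$ is a simple eigenvalue of $F(\hat t_0, \ldots, \hat t_{2k})$, then $m_{hs_{e0}\ldots s_{ek}}(H, S_{e0}, \ldots, S_{ek}) = \widehat\lambda_2$ exactly, and consequently $\eta^{{\rm even_T}}(P,\lambda) = 1/\sqrt{\widehat\lambda_2}$. There is no real obstacle here: all the technical work has been carried out in Theorem~\ref{maintheorem} (existence of minimum, upper bound, and equality under simplicity), in Theorem~\ref{thm:reduction_alt} (the reduction to $m_{hs_{e0}\ldots s_{ek}}$), and in Lemma~\ref{lem:rank_alt} (the rank condition); the proof is just the composition of these three results, and can be stated in two lines.
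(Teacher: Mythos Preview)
Your proposal is correct and follows essentially the same approach as the paper's own proof: invoke Lemma~\ref{lem:rank_alt} to verify the rank hypothesis, apply Theorem~\ref{maintheorem} to obtain existence of the global minimum together with the inequality and the equality under simplicity, and then translate via Theorem~\ref{thm:reduction_alt}. The paper's proof is indeed just these two lines.
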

\begin{proof}
In view of Lemma~\ref{lem:rank_alt}, the matrices $ S_{e0},\ldots, S_{ek}$
satisfy that for any $(t_0,\ldots,t_{2k}) \in \R^{2k+1}\setminus \{0\}$,
\[
\text{rank}\left((t_0+it_1) S_{e0} + (t_2+it_3)S_{e1}+\cdots+(t_{2p-2}+it_{2p-1}) S_{ek-1}+ t_{2p} S_{ek}\right) =2n \geq 2,
\]
since $n \geq 1$. Thus from Theorem~\ref{maintheorem}, $\psi$ attains its global minimum. Now the proof is immediate using Theorem~\eqref{maintheorem} in Theorem~\ref{thm:reduction_alt}.
\end{proof}

Analogous to Theorem~\ref{thm:formulaTeven}, in the following we obtain 
a formula for the structured eigenvalue backward error of T-odd polynomials.

\begin{theorem}\label{thm:formulaTodd}
Let $P(z)=\sum_{j=0}^mz^jA_j$ be a regular T-odd matrix polynomial, where 
$(A_0,\ldots,A_m)\in \mathbb S_o $. Let $\lambda \in \C \setminus \{0\}$
be such that $M:=(P(\lambda))^{-1}$ exists. Define 
 $k:=\left \lfloor{\frac{m}{2}}\right \rfloor$ and let 
$\psi: \R^{2k+1}\mapsto \R$ be defined by $(t_0,\ldots,t_{2k})\mapsto \lambda_2(F(t_0,\ldots,t_{2k}))$, where $F(t_0,\ldots,t_{2k})$  is as defined in Theorem~\ref{maintheorem} for matrices $H,S_{o0},\ldots,S_{ok}$ of Theorem~\ref{thm:reduction_alt}. Then 
\begin{itemize}
\item The function $\psi$ has a global minimum
\[
\widehat \lambda_2 := \min_{(t_0,\ldots,t_{2k})\in \R^{2k+1}} \psi (t_0,\ldots,t_{2k}) \quad \text{and}\quad 
\eta^{{\rm odd_T}}(P,\lambda) \geq \frac{1}{\sqrt{\widehat \lambda_2}}.
\]
\item Moreover, if the minimum $\widehat \lambda_2$ of $\psi$  is attained 
at $(\hat t_0,\ldots,\hat t_{2k}) \in \R^{2k+1}$ and is a simple eigenvalue of $F(\hat t_0,\ldots,\hat t_{2k}) $, then 
\[
\eta^{{\rm odd_T}}(P,\lambda) = \frac{1}{\sqrt{\widehat \lambda_2}}.
\]
\end{itemize}
\end{theorem}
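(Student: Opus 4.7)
The plan is to mirror the proof of Theorem~\ref{thm:formulaTeven} verbatim, with the only substantive change being the verification of the rank hypothesis in the T-odd setting so that Theorem~\ref{maintheorem} can be invoked. That is, I would first establish an analogue of Lemma~\ref{lem:rank_alt} for the matrices $S_{o0},\ldots,S_{ok}$ of Theorem~\ref{thm:reduction_alt}, then combine this rank bound with Theorems~\ref{thm:reduction_alt} and~\ref{maintheorem} exactly as in the T-even case.

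More concretely, for arbitrary $(t_0,\ldots,t_{2k})\in\R^{2k+1}\setminus\{0\}$, I would write
\[
(t_0+it_1)S_{o0}+(t_2+it_3)S_{o1}+\cdots+t_{2k}S_{ok}
= \Lambda_m^T\Omega_m\otimes M^T + \Omega_m^T\Lambda_m\otimes M,
\]
where, using the definition $S_{oj}=C_{oj}+C_{oj}^T$ with $C_{oj}=(\Lambda_m^Te_{2j+1}^T)\otimes M^T$, the row vector $\Omega_m$ now places the coefficients $t_{2j}+it_{2j+1}$ in the \emph{odd} positions of $\Lambda_m$ (padding with zeros in even positions, with the appropriate convention depending on the parity of $m$). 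Setting $J_1:=\Lambda_m^T\Omega_m\otimes M^T$, the matrix in question equals $J_1+J_1^T$. Since not all $t_j$'s vanish, the first block row of $J_1$ has at least one nonzero block entry (an invertible multiple of $M^T$); picking out this block column together with the first block column, which appears nonzero in $J_1^T$, yields two linearly independent block columns of $J_1+J_1^T$, so that its rank is at least $2n\geq 2$. This is the analogue of Lemma~\ref{lem:rank_alt} and is the only place where the specific structure of the $S_{oj}$'s enters.

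With the rank bound in hand, the remainder of the argument is automatic. By Theorem~\ref{maintheorem}(1), $\psi$ attains its global minimum $\widehat\lambda_2$ on the compact region described there, and by Lemma~\ref{lem:bound} we have $m_{hs_{o0}\ldots s_{ok}}(H,S_{o0},\ldots,S_{ok})\leq\widehat\lambda_2$. Combining with the identity
\[
\eta^{{\rm odd_T}}(P,\lambda)=\bigl(m_{hs_{o0}\ldots s_{ok}}(H,S_{o0},\ldots,S_{ok})\bigr)^{-\frac{1}{2}}
\]
from Theorem~\ref{thm:reduction_alt} yields the lower bound $\eta^{{\rm odd_T}}(P,\lambda)\geq 1/\sqrt{\widehat\lambda_2}$. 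For the second bullet, if the minimum $\widehat\lambda_2$ is attained at some $(\hat t_0,\ldots,\hat t_{2k})$ and is a simple eigenvalue of $F(\hat t_0,\ldots,\hat t_{2k})$, then Theorem~\ref{maintheorem}(2) gives equality $m_{hs_{o0}\ldots s_{ok}}(H,S_{o0},\ldots,S_{ok})=\widehat\lambda_2$, hence $\eta^{{\rm odd_T}}(P,\lambda)=1/\sqrt{\widehat\lambda_2}$.

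The only step requiring genuine work is the rank verification for the T-odd symmetric matrices, and the main (mild) obstacle is bookkeeping the position of the nonzero block in $\Omega_m$, which differs between odd and even $m$ because the padding pattern changes; but in either case the argument reduces to observing that $J_1$ has a single nonzero block row (scalar multiples of $\Omega_m\otimes M^T$), so $\mathrm{rank}(J_1)=n$ and the symmetrization $J_1+J_1^T$ has rank at least $2n$ by inspection of two linearly independent block columns. Everything else is an immediate specialization of Theorem~\ref{maintheorem}.
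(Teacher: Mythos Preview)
Your proposal is correct and follows exactly the route the paper takes: the paper's proof simply states that an analogue of Lemma~\ref{lem:rank_alt} holds for $S_{o0},\ldots,S_{ok}$ and then invokes Theorem~\ref{thm:formulaTeven} verbatim, which is precisely your plan. The only minor caveat is that in the T-odd case the first entry of $\Omega_m$ is generally nonzero (the zero padding sits in the even positions), so the ``first and $j$-th block columns'' argument from Lemma~\ref{lem:rank_alt} should instead use an even-indexed (zero) block column paired with a nonzero one---but you have already flagged this as the bookkeeping adjustment, and it does not affect the argument.
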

\begin{proof}
A result similar to Lemma~\ref{lem:rank_alt} can be obtained for matrices
$S_{o0},\ldots,S_{ok}$. Thus the proof follows on the lines of Theorem~\ref{thm:formulaTeven}.
\end{proof}

\subsection{skew-symmetric matrix polynomials}\label{sec:skewsber}

Next, we consider skew-symmetric matrix polynomial $P(z)=\sum_{j=0}^m z^jA_j$ with $A_j^T=-A_j$ for each $j=0,\ldots,m$.  Let $ SSym(n)$ denote the set of all skew-symmetric matrices of size $n \times n$ and let the skew-symmetric backward error (when $\mathbb S=(SSym(n))^{m+1}$) from~\eqref{eq:defsber} be denoted by $\eta^{{\rm skew_T}}(P,\lambda)$.
Then according to~\cite[Theorem 5.2.20]{Sha16}, computing $\eta^{{\rm skew_T}}(P,\lambda)$ is equivalent to solving a problem of the form~\eqref{eq:cenprob}. We first state this reduction from~\cite{Sha16} and then obtain a formula for 
$\eta^{{\rm skew_T}}(P,\lambda$ using Theoerm~\ref{maintheorem}.

\begin{theorem}{\rm \cite{Sha16}}\label{thm:reduction_skew}
Let$P(z)=\sum_{j=0}^m z^jA_j$ be a regular skew-symmetric matrix polynomial, where $A_j \in SSym(n)$ for each $j=0,\ldots,m$.
Suppose that $\lambda \in \C\setminus \{0\}$ such that  $M=(P(\lambda))^{-1}$ exists. Define 
$\Lambda_m:=[1\, \lambda\, \ldots \, \lambda^m] \in \C^{1 \times (m+1)}$,  $H:=\Lambda_m^*\Lambda_m \otimes M^*M$, and for each $j=0,\ldots,m$ define 
\[
C_{j}:=(\Lambda^T_m e^T_{j+1})\otimes M^T \quad \text{and} \quad  S_{j}:=C_{j}+C_{j}^T,
\]
where $e_{j+1}$ is the $j^{th}$ standard basis vector of $\R^{m+1}$.
Then, 
\[
\eta^{{\rm skew_T}}(P,\lambda)=(m_{hs_0\ldots s_m}(H,S_0,\ldots,S_m))^{-\frac{1}{2}}.
\]
\end{theorem}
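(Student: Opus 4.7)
The plan is to mirror the reduction used for the T-palindromic, T-even, and T-odd cases (Theorems~\ref{thm:palsbrbmks} and~\ref{thm:reduction_alt}) and then collapse the infimum over the perturbation tuple to a supremum of a Rayleigh quotient with quadratic symmetric constraints. First I would unpack the definition~\eqref{eq:defsber}: the determinantal condition $\det\bigl(\sum_j \lambda^j(A_j-\Delta_j)\bigr)=0$ is equivalent to the existence of $v\in\C^n\setminus\{0\}$ with $P(\lambda)v=\sum_{j=0}^m\lambda^j\Delta_j v$, which (using that $M=(P(\lambda))^{-1}$ exists) rewrites as
\[
v \;=\; M\sum_{j=0}^m\lambda^j w_j, \qquad w_j:=\Delta_j v,\quad j=0,\ldots,m.
\]

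Next I would invoke the structured mapping theorem for (complex) skew-symmetric matrices: a skew-symmetric $\Delta_j$ with $\Delta_j v=w_j$ exists if and only if $v^Tw_j=0$, and among all such $\Delta_j$ the minimal spectral norm is $\|w_j\|/\|v\|$. The necessity of $v^T w_j=0$ is immediate since $v^T\Delta_j v=0$ for every skew-symmetric matrix; sufficiency and the sharp norm bound are the standard skew-symmetric structured mapping result. Using this component-wise, the infimum in~\eqref{eq:defsber} becomes
\[
\eta^{{\rm skew_T}}(P,\lambda)\;=\;\inf\Biggl\{\,\frac{1}{\|v\|}\Bigl(\sum_{j=0}^m\|w_j\|^2\Bigr)^{1/2}\;:\; v\neq 0,\; v=M\!\sum_{j=0}^m\lambda^j w_j,\; v^Tw_j=0\;\forall\,j\,\Biggr\}.
\]

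The key step is then a change of variables to a single stacked vector. I would set $\tilde u:=[w_0^T,\ldots,w_m^T]^T\in\C^{n(m+1)}$, so that $v=M(\Lambda_m\otimes I_n)\tilde u$ and hence
\[
\|v\|^2=\tilde u^*\bigl((\Lambda_m^*\Lambda_m)\otimes(M^*M)\bigr)\tilde u=\tilde u^*H\tilde u,\qquad \sum_{j=0}^m\|w_j\|^2=\tilde u^*\tilde u.
\]
For the constraints, using $w_j=(e_{j+1}^T\otimes I_n)\tilde u$ and the Kronecker identity $(\Lambda_m^T\otimes M^T)(e_{j+1}^T\otimes I_n)=(\Lambda_m^Te_{j+1}^T)\otimes M^T=C_j$, one gets $v^Tw_j=\tilde u^TC_j\tilde u$, and since this is a scalar it equals $\tfrac12\tilde u^T(C_j+C_j^T)\tilde u=\tfrac12\tilde u^TS_j\tilde u$. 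Thus $v^Tw_j=0$ for every $j$ is equivalent to $\tilde u^TS_j\tilde u=0$ for every $j$. Note also that $v\neq 0$ is automatic on any $\tilde u$ with positive Rayleigh quotient $\tilde u^*H\tilde u/\tilde u^*\tilde u$, and conversely any admissible $v$ gives an admissible $\tilde u\neq 0$, so the two feasible sets correspond.

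Putting everything together, the quantity being minimised is $\|\tilde u\|/\|v\|=(\tilde u^*\tilde u/\tilde u^*H\tilde u)^{1/2}$, so
\[
\bigl(\eta^{{\rm skew_T}}(P,\lambda)\bigr)^{2}
=\inf_{\tilde u}\frac{\tilde u^*\tilde u}{\tilde u^*H\tilde u}
=\Bigl(\sup_{\tilde u}\frac{\tilde u^*H\tilde u}{\tilde u^*\tilde u}\Bigr)^{-1}
=\bigl(m_{hs_0\ldots s_m}(H,S_0,\ldots,S_m)\bigr)^{-1},
\]
where the suprema/infima are subject to $\tilde u^TS_j\tilde u=0$ for $j=0,\ldots,m$, which is the desired formula. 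The main obstacle is the careful invocation of the skew-symmetric mapping theorem with \emph{sharp} norm $\|w_j\|/\|v\|$ (rather than just existence) and the verification that passing between $(v,w_0,\ldots,w_m)$ and the stacked $\tilde u$ neither loses the constraint $v\neq 0$ nor introduces spurious degenerate $\tilde u$; the remaining algebra is routine Kronecker manipulation identical in spirit to that used in Lemma~\ref{lem:rankpal} and Theorem~\ref{thm:palsbrbmks}.
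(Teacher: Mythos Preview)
Your argument is correct and is essentially the standard reduction that underlies the cited result. Note that the paper itself does not prove Theorem~\ref{thm:reduction_skew}: it is quoted verbatim from~\cite{Sha16} (see the citation in the theorem heading), just as Theorems~\ref{thm:palsbrbmks} and~\ref{thm:reduction_alt} are quoted from~\cite{BorKMS15} and~\cite{Sha16}. So there is no in-paper proof to compare against; your write-up supplies precisely the argument one expects in the reference: structured skew-symmetric mapping (existence iff $v^Tw_j=0$, with sharp spectral norm $\|w_j\|/\|v\|$), stacking into $\tilde u$, and the Kronecker identities that turn $\|v\|^2$ into $\tilde u^*H\tilde u$ and $v^Tw_j$ into $\tfrac12\tilde u^TS_j\tilde u$.

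Two minor points worth tightening. First, the passage from $\inf \tilde u^*\tilde u/\tilde u^*H\tilde u$ to $(\sup \tilde u^*H\tilde u/\tilde u^*\tilde u)^{-1}$ uses that the constrained supremum is strictly positive; this holds because $H=\Lambda_m^*\Lambda_m\otimes M^*M$ is positive semidefinite of rank $n$ and the feasible set $\{\tilde u:\tilde u^TS_j\tilde u=0\}$ contains vectors with $\tilde u^*H\tilde u>0$ (equivalently, $\eta^{\rm skew_T}(P,\lambda)<\infty$, which follows since skew-symmetric perturbations of the coefficients can always create a kernel vector when $n\geq 2$, and a regular skew-symmetric polynomial forces $n$ even). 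Second, your remark that ``$v\neq 0$ is automatic on any $\tilde u$ with positive Rayleigh quotient'' is exactly the right way to dispose of degenerate $\tilde u$ with $(\Lambda_m\otimes M)\tilde u=0$; these contribute $0$ to the supremum and $+\infty$ to the infimum, so they are harmless in both directions.
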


As an application of Theorem~\ref{maintheorem} in Theorem~\ref{thm:reduction_skew} yields a computable formula for $\eta^{{\rm skew_T}}(P,\lambda)$ and is given in the following result.

\begin{theorem}\label{thm:formulaSkew}
Let $P(z)=\sum_{j=0}^mz^jA_j$ be a regular skew-symmetric matrix polynomial, where 
$A_j \in SSym(n)$ for each $j=0,\ldots,m$. Let $\lambda \in \C \setminus \{0\}$
be such that $M:=(P(\lambda))^{-1}$ exists. Let 
$\psi: \R^{2m+1}\mapsto \R$ be defined by $(t_0,\ldots,t_{2m})\mapsto \lambda_2(F(t_0,\ldots,t_{2m}))$, where $F(t_0,\ldots,t_{2m})$  is as defined in Theorem~\ref{maintheorem} for matrices $H,S_{0},\ldots,S_{m}$ of Theorem~\ref{thm:reduction_skew}. Then 
\begin{itemize}
\item  If ${\rm rank}(f(t_0,\ldots,t_{2m}))\geq 2$ for all $(t_0,\ldots,t_{2m})\in \R^{2m+1}$, where $f$ is defined by~\eqref{eq:defmat1}
for matrices $S_{0},\ldots,S_{m}$, then the function $\psi$ has a global minimum
\[
\widehat \lambda_2 := \min_{(t_0,\ldots,t_{2m})\in \R^{2m+1}} \psi (t_0,\ldots,t_{2m}) \quad \text{and}\quad 
\eta^{{\rm skew_T}}(P,\lambda) \geq \frac{1}{\sqrt{\widehat \lambda_2}}.
\]
\item Moreover, if the minimum $\widehat \lambda_2$ of $\psi$  is attained 
at $(\hat t_0,\ldots,\hat t_{2m}) \in \R^{2m+1}$ and is a simple eigenvalue of $F(\hat t_0,\ldots,\hat t_{2m}) $, then 
\[
\eta^{{\rm skew_T}}(P,\lambda) = \frac{1}{\sqrt{\widehat \lambda_2}}.
\]
\end{itemize}
\end{theorem}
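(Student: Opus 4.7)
The plan is to use the same two-step strategy that proves Theorems~\ref{thm:sberpal}, \ref{thm:sberantipal}, \ref{thm:formulaTeven} and~\ref{thm:formulaTodd}: first invoke the reduction Theorem~\ref{thm:reduction_skew} to replace the structured backward error by an instance of $m_{hs_0\ldots s_m}$, then apply the master estimate Theorem~\ref{maintheorem} to the specific matrices $H,S_0,\ldots,S_m$ defined there.

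More concretely, I would start by noting that Theorem~\ref{thm:reduction_skew} gives
\[
\eta^{{\rm skew_T}}(P,\lambda) = \bigl(m_{hs_0\ldots s_m}(H,S_0,\ldots,S_m)\bigr)^{-1/2},
\]
so both assertions of the theorem reduce to statements about $m_{hs_0\ldots s_m}(H,S_0,\ldots,S_m)$ and $\psi$. The hypothesis ${\rm rank}(f(t_0,\ldots,t_{2m}))\geq 2$ for every $(t_0,\ldots,t_{2m})\in\R^{2m+1}\setminus\{0\}$ is exactly the standing rank assumption of Lemma~\ref{lem:existence}, so Theorem~\ref{maintheorem}(1) applies verbatim: $\psi$ attains its global minimum $\widehat\lambda_2$ in the compact ball of radius $\beta$ defined there, and Lemma~\ref{lem:bound} yields the upper bound $m_{hs_0\ldots s_m}(H,S_0,\ldots,S_m)\le\widehat\lambda_2$. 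Taking reciprocal square roots gives the first bullet: $\eta^{{\rm skew_T}}(P,\lambda)\geq 1/\sqrt{\widehat\lambda_2}$.

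For the second bullet, I would appeal directly to Theorem~\ref{maintheorem}(2). If $\widehat\lambda_2=\psi(\hat t_0,\ldots,\hat t_{2m})$ is a simple eigenvalue of $F(\hat t_0,\ldots,\hat t_{2m})$, then Lemma~\ref{lem:vector} produces a nonzero $v\in\C^n$ with $v^*Hv/v^*v=\widehat\lambda_2$ and $v^TS_jv=0$ for all $j=0,\ldots,m$. Together with the bound from the first step, this forces $m_{hs_0\ldots s_m}(H,S_0,\ldots,S_m)=\widehat\lambda_2$, and hence $\eta^{{\rm skew_T}}(P,\lambda)=1/\sqrt{\widehat\lambda_2}$.

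Apart from the bookkeeping of plugging the specific $H,S_0,\ldots,S_m$ into the machinery, there is essentially no new content; this is why the result is stated with the rank condition left as a hypothesis rather than verified a priori. The only point requiring any thought is why the rank lemma analogous to Lemma~\ref{lem:rankpal} or Lemma~\ref{lem:rank_alt} is not included here: the skew-symmetric ansatz $S_j=(\Lambda_m^T e_{j+1}^T)\otimes M^T + (e_{j+1}\Lambda_m)\otimes M$ produces a combination that need not have rank $\geq 2$ in general (the pattern of $e_{j+1}$'s fills every position of $\Omega_m$, not just the symmetric pairs), so one cannot mimic the proofs of Lemmas~\ref{lem:rankpal} and~\ref{lem:rank_alt} cleanly. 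The cleanest way to state the theorem is therefore to keep the rank condition as a hypothesis, and my proof will simply cite Theorem~\ref{maintheorem} once that hypothesis is in force.
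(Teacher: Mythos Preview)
Your proposal is correct and follows precisely the approach the paper intends: the paper actually omits an explicit proof of Theorem~\ref{thm:formulaSkew}, treating it as an immediate consequence of applying Theorem~\ref{maintheorem} to the reduction in Theorem~\ref{thm:reduction_skew}, exactly as in the proofs of Theorems~\ref{thm:sberpal}--\ref{thm:formulaTodd}. Your observation about why the rank condition must remain a hypothesis is also on target and matches the paper's subsequent remark.
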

%
%

\begin{remark}{\rm
We note that unlike Lemmas~\ref{lem:rankpal} and~\ref{lem:rank_alt}, an analogous result  for the symmetric matrices 
$S_0,\ldots,S_m$ of Theorem~\ref{thm:reduction_skew} need not be true, that means, for any $(t_0,\ldots,t_m) \in \R^{2m+1}\setminus \{0\}$,
 $\text{rank}((t_0+it_1)S_0+\cdots+(t_{2m-2}+it_{2m-1})S_{m-1}+t_{2m}S_m) \geq 2n$ is not necessarily true. For example, in the pencil case (when $m=1$), the symmetric matrices are given as 
 \[
 S_0=\Lambda_m^Te_1^T \otimes M^T + e_1\Lambda_m \otimes M
 =\mat{cc}M+M^T & \lambda M \\ \lambda M^T & 0
 \rix
 \]
 and 
  \[
 S_1=\Lambda_m^Te_2^T \otimes M^T + e_2\Lambda_m \otimes M
 =\mat{cc}0 &  M \\  M^T & \lambda (M+M^T)
 \rix.
 \]
Thus if we take $t_0,t_1 \in \R$ and set $t_2=\lambda(t_0+it_1)$, then 
$\text{rank}((t_0+it_1)S_0+t_2S_1) \leq n$.
}
\end{remark}

\section{Numerical Experiments}
\label{sec:numerical}

In this section, we illustrate the proposed methods for computing $m_{h s_0 \ldots s_m}$. To compute the formula in Theorem~\ref{maintheorem}, we used the 
{\it GlobalSearch} in Matlab Version No. 9.5.0 (R2018b) to solve the associated optimization problem.

\subsection{Comparison with the upper bound in~{\rm \cite{Sha16}}}

Note that for given matrices $H \in {\rm Herm}(n)$ and $S_0,\ldots,S_k \in {\rm Sym}(n)$, an upper bound for $m_{hs_0 \ldots s_m}$ was obtained in~\cite[Theorem 3.2.2]{Sha16} in terms of $2^{k+1}$th largest eigenvalue of a recursively defined parameter depending Hermitian matrix of size $2^{(k+1)n} \times 2^{(k+1)n}$. This upper bound was conjectured to be equal to $m_{h S_0 \ldots S_m}$ 
in~\cite[Conjecture 3.2.8]{Sha16} if the eigenvalue at the optimal is simple. 
We compare our result (Theorem~\ref{maintheorem}) of computing $m_{h s_0 \ldots s_m}$ with the bound in~\cite[Theorem 3.2.2]{Sha16}.

We generate the matrices $H \in {\rm Herm}(n)$ and $S_0,\ldots,S_1 \in {\rm Sym}(n)$ for different values of $n$ randomly using the {\it randn} and  {\it rand} commands in Matlab. We chose these matrices such that the optimal eigenvalues in Theorem~\ref{maintheorem} and in~\cite[Theorem 3.2.2]{Sha16}  are simple. This will imply the exact formula for $m_{h s_0 s_1}$ by Theorem~\ref{maintheorem}. Also, this will satisfy the hypothesis in~\cite[Conjecture 3.2.8]{Sha16}.
The computed results are depicted in Table~\ref{tab:compare1}, where the right and left side charts  show the results, respectively, when the matrices are generated using {\it rand} and {\it randn} commands in Matlab. We make two observations: 

\begin{itemize}
\item Our estimation to $m_{hs_0s_1}$ in Theorem~\ref{maintheorem}  is better than the one obtained in~\cite[Theorem 3.2.2]{Sha16};
\item Since the values in the second column ($m_{hs_0s_1}$ according to Theorem~\ref{maintheorem}) is smaller than the values in the third column (upper bound in~\cite[Theorem 3.2.2]{Sha16}), we conclude that the conjecture in~\cite{Sha16} is not true, and  it is in general only an upper bound to $m_{hs_0s_1}$. 
\end{itemize}

\begin{table}[ht]
        \centering
\begin{tabular}{cc}
        \begin{tabular}{|c|c|c|}
            \hline
             n & $m_{hs_0s_1}$ & upper bound \\ 
             &Thoerem~\ref{maintheorem} & \cite[Theorem 3.2.2]{Sha16}\\   \hline
            3 & 8.9239 & 8.9421 \\  \hline
            4 & 11.4218 & 11.4374 \\ \hline
            5 & 17.4434  & 17.8928 \\ \hline
            6 & 24.6630 & 24.6895 \\ \hline
            7 & 23.9694 & 23.9913 \\  \hline
        \end{tabular} &
       \begin{tabular}{|c|c|c|}
            \hline
             n & $m_{hs_0s_1}$ & upper bound \\ 
             &Thoerem~\ref{maintheorem} & \cite[Theorem 3.2.2]{Sha16}\\ \hline
            3 & 0.9346 & 0.9347 \\ \hline
            4 & 2.1144 & 2.1860 \\ \hline
            5 & 2.5708 & 2.6423 \\ \hline
            6 & 3.5511 & 3.5555 \\ \hline
            7 & 3.3251 & 3.3402 \\  \hline
        \end{tabular}
\end{tabular}
        \caption{Estimation to $m_{hs_0 s_1}$ from Thoerem~\ref{maintheorem} and \cite[Theorem 3.2.2]{Sha16}} \label{tab:compare1}
\end{table}

\subsection{Estimation to structured eigenvalue backward errors}

In view of Sections~\ref{sec:Palsber}-\ref{sec:skewsber}, we can
estimate the structured eigenvalue backward errors of T-palindromic, T-antipalindromic, T-even, T-odd, and skew-symmetric polynomials by computing $m_{h s_0 \ldots s_k}$. We  include examples only for T-palindromic and T-even polynomials by noting similar results for other structures also. 

\begin{example}{\rm (T-palilndromic polynomial)
$P(z)=A_0+zA_1+z^2A_1^T+z^3A_0^T$ is a $3 \times 3$ T-palindromic polynomial of degree $3$, where the entries of the coefficient matrices are generated using the {\it randn} command in Matlab. 

Table~\ref{table:2} (left side) records  eigenvalue backward errors of $P(z)$ for random $\lambda$. The second column stands for the unstructured eigenvalue backward error $\eta(P,\lambda)$ (from~\eqref{eq:unsber}) and the third column records the structured eigenvalue backward error $\eta^{{\rm pal_T}}(P,\lambda)$ obtained in Theorem~\ref{thm:sberpal}. Note that, we found the exact value of $\eta^{{\rm pal_T}}(P,\lambda)$ up to four decimal points for these examples, since the eigenvalue at the optimal is simple (see Theorem~\ref{thm:sberpal}). This may be due to the fact that the corresponding matrices $H$ and $S_0,S_1$ are special with some specific structures. This shows that, as expected, the eigenvalue backward errors with respect to structure-preserving and unstructured perturbations are significantly different. 

Next, we chose $\lambda$ values such that they approach to the eigenvalue    $-0.8493-0.5910i$ of  $P(z)$. The corresponding results are depicted in Table~\ref{table:2} (right side). Again the unstructured and the structured eigenvalue backward errors are different. Also, as expected, both unstructured and structured eigenvalue backward errors tend towards zero with $\lambda$ approaching  the eigenvalue $-0.8493-0.5910i$. 

\begin{table}[h!]
\centering
\begin{tabular}{cc}
\begin{tabular}{|c|c|c|}
\hline
 $\lambda$ & $\eta(P,\lambda)$ & $\eta^{{\rm pal_T}}(P,\lambda)$ \\ 
 &\eqref{eq:unsber} & Theorem~\ref{thm:sberpal} \\
\hline
0.629+1.023i & 1.2115 & 1.5684   \\ 
\hline
0.296+0.732i & 1.0175 & 1.3515  \\ 
\hline
1.091-0.283i & 2.4592 & 2.5712   \\
\hline
0.498-0.941i & 1.1356 & 1.4728   \\
\hline
-1.503-0.137i & 0.9617 & 1.1007  \\
\hline
-0.723+1.364i & 0.4613 & 0.5689   \\
\hline
\end{tabular}&
\begin{tabular}{ | c|c|c|}
\hline
$\lambda$ & $\eta(P,\lambda)$ & $\eta^{palT}(P,\lambda)$  \\ 
 &\eqref{eq:unsber} & Theorem~\ref{thm:sberpal} \\
\hline
-0.815-0.565i & 0.0615 & 0.0764   \\ 
\hline
-0.820-0.570i & 0.0514 & 0.0638    \\ 
\hline
-0.825-0.575i & 0.0414 & 0.0514  \\
\hline
-0.830-0.580i & 0.0315 & 0.0392   \\
\hline
-0.835-0.585i & 0.0220 & 0.0273    \\
\hline
-0.840-0.590i & 0.0132 & 0.0164    \\
\hline
\end{tabular}
\end{tabular}
\caption{Structured and unstructured eigenvalue backward errors for T-palindromic polynomial $P(z)$}
\label{table:2}
\end{table}

}
\end{example}

\begin{example}{\rm (T-even polynomial)
 $L(z)=A_0+zA_1+z^2A_2+z^3A_3$ is a $4 \times 4$ T-even polynomial of degree $3$, where the entries of the coefficient matrices are generated using the {\it randn} command in Matlab. The structured eigenvalue backward error, $\eta^{{\rm even_T}}(P,\lambda$, is obtained using Theorem~\ref{thm:formulaTeven}, and the unstructured backward error is computed using~\eqref{eq:unsber}.
 We record these values  in Table~\ref{table:3} (left side) for randomly generated $\lambda$. The two eigenvalue backward errors $\eta(P,\lambda)$ and $\eta^{{\rm even_T}}(P,\lambda)$ are significantly different. This shows that eigenvalues are less sensitive under structure-preserving perturbations. Table~\ref{table:3} (right side) records the result when $\lambda$ approaches to the eigenvalue $-0.9544+1.8341i$ of $L(z)$. In this case, both unstructured and structured backward errors tend towards zero.

\begin{table}[h!]
\centering
\begin{tabular}{cc}
\begin{tabular}{ |c|c|c|}
\hline
\centering $\lambda$ & $\eta(P,\lambda)$ & $\eta^{{\rm even_T}}(P,\lambda)$  \\ 
&\eqref{eq:unsber} &Theorem~\ref{thm:formulaTeven}\\
\hline
0.502-0.0123i & 0.76750 & 0.83996    \\ 
\hline
0.552-0.0173i & 0.78366 & 0.86632   \\ 
\hline
0.602-0.0223i & 0.79410 & 0.88302    \\
\hline
0.652-0.0273i & 0.79742 & 0.88720    \\
\hline
0.702-0.0323i & 0.79261 & 0.87838    \\
\hline
0.752-0.0373i & 0.77926 & 0.85792    \\
\hline
\end{tabular}
&
\begin{tabular}{ | c|c|c|}
\hline
\centering $\lambda$ & $\eta(P,\lambda)$ & $\eta^{{\rm even_T}}(P,\lambda)$  \\ 
&\eqref{eq:unsber} &Theorem~\ref{thm:formulaTeven}\\
\hline
-0.152+1.708i & 0.09741 & 0.10343  \\ 
\hline
-0.302+1.858i & 0.07952 & 0.08614   \\ 
\hline
-0.452+2.008i & 0.06361 & 0.06366   \\
\hline
-0.602+2.158i & 0.03849 & 0.04169   \\
\hline
-0.752+2.308i & 0.02038 & 0.02215   \\
\hline
-0.902+2.458i & 0.00457 & 0.00499   \\
\hline
\end{tabular}
\end{tabular}
\caption{Structured and unstructured eigenvalue backward errors for T-even polynomial $L(z)$}
\label{table:3}
\end{table}
}
\end{example}

\section{Conclusion}

In this paper, we have derived an estimation of the supermum of the Rayleigh quotient of a Hermitian matrix with respect to constraints involving symmetric matrices. Our estimate is in the form of minimizing the second largest eigenvalue of a parameter depending Hermitian matrix. 
We have then applied these results to compute the structured eigenvalue backward errors of matrix polynomials with T-palindromic, T-antipalindromic, T-even, T-odd, and skew-symmetric structures. 

\bibliographystyle{siam}
\bibliography{bibliostable}

\end{document}